\newcommand{\sknote}[1]{{${}^*$\marginpar{\tiny{{\bf sk:} #1}}}}
\newtheorem{thm}{Theorem}[section]
\newtheorem*{thm*}{Theorem}
\newtheorem{lem}[thm]{Lemma}
\newtheorem{cor}[thm]{Corollary}
\newtheorem{prop}[thm]{Proposition}
\theoremstyle{definition}
\newtheorem{defn}[thm]{Definition}
\newtheorem{q}[thm]{Question}
\newtheorem{qs}[thm]{Questions}
\newtheorem{ex}[thm]{Example}
\newtheorem*{notn*}{Notation}
\newtheorem*{hyp*}{Hypothesis}
\newtheorem{rem}[thm]{Remark}
\newtheorem*{rem*}{Remark}
\numberwithin{equation}{section}
\newcommand{\thmref}[1]{Theorem~\textup{\ref{#1}}}
\newcommand{\midtext}[1]{\quad\text{#1}\quad}
\renewcommand{\and}{\midtext{and}}
\renewcommand{\)}{\textup)}
\newcommand{\C}{\mathbb C}
\newcommand{\CC}{\mathcal C}
\newcommand{\KK}{\mathcal K}
\renewcommand{\AA}{\mathcal A}
\renewcommand{\d}{\delta}
\newcommand{\p}{\phi}
\newcommand{\s}{\sigma}
\renewcommand{\epsilon}{\varepsilon}
\DeclareMathOperator{\aut}{Aut}
\DeclareMathOperator{\ad}{Ad}
\DeclareMathOperator{\mor}{Mor}
\DeclareMathOperator{\maxi}{Max}
\DeclareMathOperator{\nor}{Nor}
\DeclareMathOperator{\fix}{Fix}
\DeclareMathOperator{\inc}{Inc}
\DeclareMathOperator*{\spn}{span}
\DeclareMathOperator*{\clspn}{\overline{\spn}}
\newcommand{\id}{\text{\textup{id}}}
\newcommand{\inv}{^{-1}}
\newcommand{\iso}{\overset{\cong}{\longrightarrow}}
\renewcommand{\bar}{\overline}
\newcommand{\what}{\widehat}
\newcommand{\wilde}{\widetilde}
\newcommand{\rt}{\textup{rt}}
\newcommand{\cs}%
{\ensuremath{\mathbf{C^*}}}
\newcommand{\csnd}%
{\ensuremath{\cs\!\!_\mathbf{nd}}}
\newcommand{\coact}%
{\ensuremath{\mathbf{C^*coact}}}
\newcommand{\coactnd}%
{\ensuremath{\coact_\mathbf{nd}}}
\newcommand{\coactn}%
{\ensuremath{\coact^\mathbf{n}}}
\newcommand{\coactnnd}%
{\ensuremath{\coactn_\mathbf{nd}}}
\newcommand{\coactm}%
{\ensuremath{\coact^\mathbf{m}}}
\newcommand{\coactmnd}%
{\ensuremath{\coactm_\mathbf{nd}}}
\newcommand{\act}{\ensuremath{\mathbf{C^*act}}}
\newcommand{\dn}{\downarrow}
\newcommand{\RCP}{\textup{RCP}}
\newcommand{\CP}{\textup{CP}}
\newcommand{\Id}{\text{\textup{Id}}}
\newcommand{\cp}{\CP}
\newcommand{\rcp}{\RCP}
\renewcommand{\act}{\ensuremath{\AA(G)}}
\renewcommand{\coact}{\ensuremath{\CC(G)}}
\newcommand{\ncoact}{\ensuremath{\CC^n(G)}}
\newcommand{\mcoact}{\ensuremath{\CC^m(G)}}
\newcommand{\sk}{\marginpar{¥}rginpar{\tiny{\textbf{sk}}}}
\renewcommand{\sk}{\relax}
\renewcommand{\sknote}[1]{\relax}
\begin{document}

\title{Categorical perspectives in noncommutative duality}

\author{S.~Kaliszewski}
\author{John Quigg}

\thanks{These notes were written to accompany a mini-course 
given by the authors at the summer school on 
\emph{C*-algebras and their interplay with dynamical systems}
held at the Sophus Lie Conference Center
in Nordfjordeid, Norway, in June 2010.
The summer school was supported by 
the Research Council of Norway, and by NordForsk.}

\date{June 10, 2010}

\begin{abstract}
Noncommutative duality for $C^*$-dynamical systems 
is a vast generalization of Pontryagin duality 
for locally compact abelian groups.  
In this series of lectures,
we give an introduction to the categorical aspects of this duality,
focusing primarily on Landstad duality for actions and coactions
of locally compact groups.
\end{abstract}

\maketitle



\section{Classical Landstad Duality}
\label{lecture 1}

\subsection{Motivation, Overview}

The first crossed-product duality theorem for $C^*$-algebras, due to Takai, says that if $\alpha:G\to \aut A$ is an action of an abelian locally compact group $G$ on a $C^*$-algebra $A$, then there is 
a \emph{dual action}
action $\what\alpha$ of the Pontryagin dual group $\what G$ on the crossed product $A\times_\alpha G$ such that the double crossed product is stably isomorphic to $A$:
\[
(A\times_\alpha G)\times_{\what\alpha}\what G\cong A\otimes \KK,
\]
where $\KK$ denotes the compact operators on $L^2(G)$.

Thus, having the dual action $\what\alpha$ of~$G$ 
almost allows us to recover the original $C^*$-algebra $A$
from $A\times_\alpha G$.
Not up to isomorphism, but it comes close: 
$A\otimes\KK$ has many of the same properties as $A$, 
and in particular is Morita equivalent to $A$ 
(so has the same primitive ideal space, representation theory, and $K$-theory, among other things).

Operator algebraists thought this crossed-product duality was such a good thing that they immediately began searching for a generalization to nonabelian groups, and thus the 
$C^*$-algebraic theory of \emph{coactions}
and their crossed products
was born: Imai and Takai proved that, roughly speaking, for an action $\alpha$ of \emph{any} locally compact group $G$ on $A$ there is a coaction\footnote{more about these later} $\what\alpha$ of $G$ on $A\times_\alpha G$ such that again
\[
(A\times_\alpha G)\times_{\what\alpha} G\cong  A\otimes \KK.
\]

Our main focus in these talks will be another type of duality for crossed products, proved by Landstad, that allows us to recover $A$ from the crossed product up to isomorphism, not just up to Morita equivalence. Landstad refers to it as a ``duality theorem for $C^*$-crossed products'', and nowadays the rest of us call it ``Landstad duality''.

In this first talk we'll give enough background to understand this classical Landstad duality. 
The later talks will develop a categorical framework for understanding
and extending Landstad duality.  
We expect that the philosophy and techniques we present here
in this particular context
will also be useful in other aspects
of the study of $C^*$-dynamical systems.

\subsection{Goal of Classical Landstad Duality}

The two basic questions underlying Landstad duality, as originally posed (and answered) in \cite{lan:dual}, are as follows:

\begin{q}\label{recover}
If $A\times_{\alpha,r} G$ is a reduced crossed product
by an action $\alpha$ of a given locally compact group $G$,
how do we recover $A$ and $\alpha$?
\end{q}

\begin{q}\label{identify}
When is a $C^*$-algebra $B$ the reduced crossed product of some $C^*$-algebra by some action of a given locally compact group~$G$?
\end{q}

Of course, the most meaningful interpretation of ``is'' in both questions is ``up to isomorphism''.
Moreover, the best answer to Question~\ref{recover} should involve only the 
criteria used to answer Question~\ref{identify}, not any intrinsic information 
about $A$ or $\alpha$. 

In order to see how to answer these questions, we need to look more closely at crossed products.

\subsection{Crossed products and reduced crossed products by actions}

For a more thorough exposition of the theory of actions, coactions, and their crossed products, we recommend \cite[Appendix~A]{BE}. Here we'll just give a ``hand-wavy'' introduction. Given an action $(A,G,\alpha)$, there are many constructions of the 
full crossed product $A\times_\alpha G$, but the main thing is that it has a universal property:\footnote{which is in fact the \emph{raison d'\'etre} of the full crossed product in the first place} it has the same representation theory as the covariant representations of the action.
A \emph{covariant representation} of an action $(A,\alpha)$ (note that we've dropped the notation $G$ --- the group $G$ will always be around, even if not explicitly named) is a pair $(\pi,u)$,
where $\pi$ and $u$ are representations of $A$ and $G$, respectively, on a Hilbert space, such that $\pi\circ\alpha_s=\ad u_s\circ\pi$ for $s\in G$.
More generally, instead of representations on Hilbert space, it's customary to use \emph{covariant homomorphisms}, which take values in 
the multiplier algebra $M(B)$ of a $C^*$-algebra $B$
rather than the bounded operators on Hilbert space.
The crossed product comes with a \emph{universal covariant homomorphism} $(i_A,i_G)$ taking values in $M(A\times_\alpha G)$
and generating the crossed product in the sense that
$A\times_\alpha G = \clspn\{ i_A(a) i_G(g) : a\in A, g\in C^*(G) \}$. 


We should immediately explain the \emph{universal property} of crossed products, since it's fundamental to everything we'll discuss: 
for each covariant homomorphism $(\pi,u)$ of an action $(A,\alpha)$ in $M(B)$, there is a unique 
\sk homomorphism
\[
\pi\times u:A\times_\alpha G\to M(B),
\]
called the \emph{integrated form of $(\pi,u)$},
making the diagram
\[
\xymatrix{
A \ar[r]^-{i_A} \ar[dr]_\pi
&M(A\times_\alpha G) \ar@{-->}[d]_(.4){\pi\times u}^(.4){!}
&G \ar[l]_-{i_G} \ar[dl]^u
\\
&M(B)
}
\]
commute.


Ok, that's great, but often the crossed product is too big --- the trade-off for having a nice universal property is that it's hard to ``see'' the elements of $A\times_\alpha G$. One particularly useful representation of the crossed product is the \emph{regular representation}, analogous to (and generalizing) the regular representation of $G$. The image of $A\times_\alpha G$ under the regular representation is the \emph{reduced crossed product} $A\times_{\alpha,r} G$.
The reduced crossed product doesn't capture all the representation theory of the action
(although it does encapsulate a lot of it, and in fact all of it when the group $G$ is amenable\footnote{and we won't go into the precise definition of amenability here, except to say that it's equivalent to $C^*(G)=C^*_r(G)$, and is a good thing, enjoyed by, for example, abelian and compact groups, but not by the free group on two generators --- which leads to the Banach-Tarski paradox, but that's another story, as Kipling would say})
but the elements of $A\times_{\alpha,r} G$ are ``easier to see'' because they're (often quite familiar) operators on Hilbert space.
The \emph{canonical covariant homomorphism} associated to the regular representation is denoted $(i_A^r,i_G^r)$.

By the universal property of $A\times_\alpha G$,
the reduced crossed product encodes exactly those covariant representations 
of $(A,\alpha)$
which factor through the regular representation. 
The surjective homomorphism\footnote{and the reason for the superscript ``$n$'' will be given later} of the full crossed product onto the reduced is denoted
\[
q^n:A\times_\alpha G\to A\times_{\alpha,r} G,
\]
and in fact another definition of the reduced crossed product, more useful for our purposes, is as the quotient of $A\times_\alpha G$ by the kernel of the regular representation.
Then the above surjection $q^n$ is just the quotient map.

\begin{ex}\label{cstarG}
One can view the full group $C^*$-algebra $C^*(G)$ as a crossed product
$\C\times_{\text{tr}} G$, where $\text{tr}$ is the trivial action.  
The universal property of the crossed product reduces to 
the universal property of $C^*(G)$ for (strictly continuous) unitary representations of $G$.  Under this identification, 
the reduced crossed product $\C\times_{\text{tr},r}G$ is the 
reduced group $C^*$-algebra $C^*_r(G)$.  

Throughout these notes, we will suppress the canonical map of $G$ into $M(C^*(G))$,
and simply write $s\in M(C^*(G))$ for $s\in G$. 
\end{ex}

If $\phi:A\to M(B)$ is an $\alpha-\beta$ equivariant homomorphism
(meaning that $\phi\circ\alpha_s = \beta_s\circ\phi$ for $s\in G$),
then $(i_B\circ\phi, i_G^\beta)$ is covariant for $(A,\alpha)$,
so there is a homomorphism
\[
\phi\times G=(i_B\circ\phi)\times i_G^\beta:A\times_\alpha G
\to M(B\times_\beta G).
\]
This phenomenon persists for the reduced crossed product
(although this is not obvious): 
there is a unique homomorphism $\phi\times_r G$ making the diagram
\[
\xymatrix@C+30pt{
A\times_\alpha G \ar[r]^-{\phi\times G} \ar[d]_{q^n_A}
&M(B\times_\beta G) \ar[d]^{q^n_B}
\\
A\times_{\alpha,r} G \ar@{-->}[r]_-{\phi\times_r G}^-{!}
&M(B\times_{\beta,r} G).
}
\]
commute, i.e., the kernel of $q^n_B\circ(\phi\times G)$ contains the kernel of $q^n_A$.

The crossed product carries a \emph{dual coaction} $\what\alpha$ of $G$. Being a coaction\footnote{precise definition coming later}, $\what\alpha$ is a certain type of homomorphism
\[
\what\alpha:A\times_\alpha G\to M\bigl((A\times_\alpha G)\otimes C^*(G)\bigr).
\]
The reduced crossed product carries its own version\footnote{and again the reason for the superscript ``$n$'' will come later} $\what\alpha^n$ of the dual coaction, making the diagram
\[
\xymatrix{
A\times_\alpha G \ar[r]^-{\what\alpha} \ar[d]_{q^n}
&M((A\times_\alpha G)\otimes C^*(G)) \ar[d]^{q^n\otimes\id}
\\
A\times_{\alpha,r} G \ar[r]_-{\what\alpha^n}
&M((A\times_{\alpha,r} G)\otimes C^*(G))
}
\]
commute\footnote{and to express this more compactly we say $q^n$ is 
``$\what\alpha-\what\alpha^n$
equivariant''}.

So, the reduced crossed product $A\times_{\alpha,r} G$ is a $C^*$-algebra which comes equipped with certain other gadgets: a covariant homomorphism $(i_A^r,i_G^r)$ 
of $(A,G)$ in $M(A\times_{\alpha,r}G)$, and a coaction $\what\alpha^n$ of $G$.

\subsection{Coactions, dual coactions, normal coactions}

The theory of coactions is parallel to that for actions: the \emph{crossed product} of a coaction $(A,\delta)$ is a $C^*$-algebra $A\times_\delta G$ whose representations are the same as the covariant representations of $(A,\delta)$, and more generally for the covariant homomorphisms.
To explain further we'll resort to a little bit of hand-waving (and again, for more
complete details, we recommend \cite[Appendix~A]{BE}):

Coactions of $G$ are meant to play the role for general locally compact groups
that actions of the dual group $\what G$ play when $G$ is abelian.  
To see how this works, suppose $G$ is abelian. 
Then an action $(A,\alpha)$ of $\what G$ corresponds to a certain type of \emph{comodule} structure
\[
\wilde\alpha:A\to M(A\otimes C_0(\what G)),
\]
which, after $A\otimes C_0(\what G)$ has been identified with $C_0(\what G,A)$ and the multiplier algebra with $C_b(\what G,M^\beta(A))$ (the superscript ``$\beta$'' means
$M(A)$ has the strict topology),
is given by
\[
\wilde\alpha(a)(\chi)=\alpha_{\chi}(a)\midtext{for}a\in A, \chi\in \what G.
\]
Using the (inverse) Fourier transform to take $C_0(\what G)$ isomorphically onto $C^*(G)$,
we see that an action of $\what G$ corresponds to a certain comodule structure
\[
A\to M(A\otimes C^*(G)),
\]
and abstracting this structure so that all mention of $\what G$ has been expunged gives rise to the definition of a coaction for general $G$:

\begin{defn}
\label{coaction}
A \emph{coaction} of $G$ on a $C^*$-algebra $A$ is an
injective nondegenerate homomorphism
\[
\delta:A\to M(A\otimes C^*(G))
\]
satisfying
\begin{enumerate}
\item $\clspn\{\delta(A)(1\otimes C^*(G))\}=A\otimes C^*(G)$;

\item $(\delta\otimes\id)\circ\delta=(\id\otimes\delta_G)\circ\delta$,
where $\delta_G:C^*(G)\to M(C^*(G)\otimes C^*(G))$ is the homomorphism corresponding to the 
strictly continuous unitary homomorphism of $G$ into $M(C^*(G)\otimes C^*(G))$
given by%
\footnote{Recall that we identify $s\in G$ with its canonical image in $M(C^*(G))$.}
\[
s \mapsto s\otimes s.
\]
\end{enumerate}
\end{defn}

Item~(1) is a technical condition that makes crossed-product duality work\footnote{and used to be called \emph{nondegeneracy} of the coaction}, and (2) is a kind of ``co-associativity'' corresponding to the property that, when $G$ is abelian, an action of $\what G$ on $A$ is a \emph{homomorphism} from $\what G$ to the automorphism group $\aut A$.
It's important to observe that $\delta_G$ itself is a coaction of $G$ on $C^*(G)$, called the \emph{canonical coaction} (or \emph{comultiplication}) on $C^*(G)$.

\begin{ex}
The \emph{dual coaction} $\what\alpha^n$ on the reduced crossed product $A\times_{\alpha,r} G$ of an action $(A,\alpha)$ is the homomorphism
\[
\what\alpha^n=(i_A^r\otimes 1)\times (i_G^r\otimes\id):
A\times_{\alpha,r} G\to
M\bigl((A\times_{\alpha,r} G)\otimes C^*(G)\bigr).
\]
Thus on the generators we have
\begin{align}
\what\alpha^n(i_A^r(a))&=i_A^r(a)\otimes 1\midtext{for}a\in A
\label{alpha hat A}\\
\what\alpha^n(i_G^r(s))&=i_G^r(s)\otimes s\midtext{for}s\in G.
\label{alpha hat G}.
\end{align}
In the next lecture it'll be important to note that
\eqref{alpha hat G} implies that the homomorphism $i_G^r:C^*(G)\to M(A\times_{\alpha,r} G)$ is $\delta_G-\what\alpha^n$ equivariant.
\end{ex}

It's not so easy to describe the covariant homomorphisms of a coaction, and again we'll resort to hand-waving.
As a warm-up, let's rephrase covariance for actions: 
a covariant homomorphism $(\pi,u)$ in a multiplier algebra $M(B)$
of an action $(A,G,\alpha)$ can be regarded as 
a homomorphism  $\pi:A\to M(B)$
that is $\alpha-\ad u$ equivariant:
\[
 \pi(\alpha_s(a)) = u_s \pi(a) u_s^* = (\ad u)_s(\pi(a)),
\]
where $\ad u:G\to \aut B$ is the \emph{inner action} implemented by the unitary homomorphism $u:G\to M(B)$.

If $G$ is abelian and $\alpha$ is an action of $\what G$ on $A$, 
then a covariant homomorphism involves a homomorphism $u:\what G\to M(B)$,
and by the universal property of the group $C^*$-algebra, 
this corresponds to a homomorphism\footnote{and here 
we've made a choice to abuse notation by using $u$ for two different maps; 
it's a fairly common trade-off.} $u:C^*(\what G)\to M(B)$.
Fourier transforming, we get a homomorphism $\mu:C_0(G)\to M(B)$, 
which implements (what must be)
an \emph{inner coaction} $\ad\mu$ of $G$ on $B$. 
Generalizing to (possibly) nonabelian groups~$G$, 
a \emph{covariant homomorphism} in a multiplier algebra $M(B)$
of a coaction $(A,G,\delta)$ 
can be defined as a pair $(\pi,\mu)$, where
$\mu:C_0(G)\to M(B)$ is a homomorphism and
$\pi:A\to M(B)$ is 
$\delta-\ad\mu$ equivariant:
\[
(\pi\otimes\id)(\delta(a))  = (\ad\mu)(\pi(a)).
\]
There is a \emph{universal covariant homomorphism} $(j_A,j_G)$ of the coaction $(A,\delta)$ in $M(A\times_\delta G)$.

The universal property for crossed products of coactions is parallel to that for actions: given a covariant homomorphism $(\pi,\mu)$ of a coaction $(A,\delta)$ in $M(B)$, there is a unique \emph{integrated form}
\[
\pi\times\mu:A\times_\delta G\to M(B)
\]
making the diagram
\[
\xymatrix{
A \ar[r]^-{j_A} \ar[dr]_\pi
&M(A\times_\delta G) \ar@{-->}[d]_(.4){\pi\times\mu}^(.4){!}
&C_0(G) \ar[l]_-{j_G} \ar[dl]^\mu
\\
&M(B)
}
\]
commute.

It turns out that coactions come in various flavors%
\footnote{although crossed products by coactions do not!}%
, and here we want the \emph{normal} ones, which means that the canonical homomorphism $j_A:A\to M(A\times_\delta G)$ is injective; this not only makes life significantly easier, but is necessary for the (imminent) statement of the ``original'' Landstad duality.

\begin{ex}\label{normal}
The dual coaction $\what\alpha^n$ of $G$ on a \emph{reduced} crossed product $A\times_{\alpha,r}G$ by an action of $G$ is always normal;
in particular, 
the canonical coaction $\delta_G^n$ on $C_r^*(G) = \C\times_{\text{tr},r}G$
is always normal.
In general, if $A\times_\alpha G \neq A\times_{\alpha,r}G$,
then the dual coaction $\what\alpha$ on the full crossed product is not normal.

For any $A$ and $G$, the trivial coaction $a \mapsto a\otimes 1$ of $G$ on $A$
is normal, because the canonical map $j_A$ becomes the canonical embedding
$a\mapsto a\otimes 1$ of $A$ into $M(A\otimes C_0(G))$.
\end{ex}


%
%


\subsection{Classical Landstad Duality}

We have seen that every reduced crossed product $A\times_{\alpha,r} G$
comes with a certain unitary homomorphism
$i_G^r$ of $G$ into $M(A\times_{\alpha,r} G)$
and a dual coaction $\what\alpha^n$ of $G$
which are related to one another by~\eqref{alpha hat G}.
Landstad's answer to Question~\ref{identify}
is that having this extra structure
entirely characterizes the reduced crossed products by~$G$. 
(Landstad's answer to Question~\ref{recover}
was a technical \emph{tour de force} by which an action isomorphic to $(A,\alpha)$
was explicitly constructed from $A\times_{\alpha,r}G$ using $i_G^r$ and $\what\alpha^n$.)

This theorem, originally proven by Landstad in \cite[Theorem~3]{lan:dual}, has become one of the standard results in $C^*$-dynamical systems, especially regarding ``noncommutative duality''.
The version we state here appeared 
in \cite{kqfulllandstad} as a stepping stone to Landstad duality for \emph{full} crossed products\footnote{but that is another story, as Kipling would say, which we will tell in a later talk.};
it's a significant restructuring of Landstad's version, 
and we needed to work a bit to prove it. 

\begin{thm}
[{\cite[Theorem~3.1]{kqfulllandstad}}]
\label{landstad normal}
Let $B$ be a $C^*$-algebra and let $G$ be a locally compact group.
Then
\begin{enumerate}
\item
There exist an action $(A,\alpha)$ of $G$ 
and an isomorphism $\theta\colon B\to A\times_{\alpha,r}G$
if and only if 
there 
are
a strictly continuous unitary homomorphism $u:G\to M(B)$ 
and a normal coaction $\delta$ of $G$ on $B$ such that
\begin{align*}
\delta(u_s)&=u_s\otimes s
\quad\text{for all $s\in G$.}
\end{align*}
Moreover, 
we can choose $A$, $\alpha$, and $\theta$ such that
$\theta$ is $\delta-\what\alpha^n$ equivariant and $\theta\circ u=i_G^{\alpha,r}$.

\item
If $(C,\beta)$ is another action
and $\sigma:B\to C\times_{\beta,r} G$ is a $\delta-\what\beta^n$ equivariant isomorphism 
such that $\sigma\circ u=i_G^{\beta,r}$, 
then there is a $\alpha-\beta$ equivariant isomorphism $\phi:A\to C$ 
such that $(\phi\times_r G)\circ\theta=\sigma$.
\end{enumerate}
\end{thm}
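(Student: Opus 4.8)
The plan is to dispatch the ``only if'' half of~(1) by transport of structure, to build the \emph{Landstad algebra} for the ``if'' half, and then to deduce~(2) by arguing that this algebra is intrinsic to the triple $(B,u,\delta)$. For the ``only if'' direction, suppose $(A,\alpha)$ and $\theta\colon B\to A\times_{\alpha,r}G$ exist. I would set $u:=\theta\inv\circ i_G^{\alpha,r}$ and define $\delta$ to be the coaction on $B$ obtained by pulling the dual coaction back along $\theta$, i.e.\ $\delta:=(\theta\inv\otimes\id)\circ\what\alpha^n\circ\theta$. Then $\theta$ is $\delta$--$\what\alpha^n$ equivariant with $\theta\circ u=i_G^{\alpha,r}$ by construction; $\delta$ is normal because $\what\alpha^n$ is (\exref{normal}) and normality is preserved by isomorphism; and the relation $\delta(u_s)=u_s\otimes s$ is exactly~\eqref{alpha hat G} transported along $\theta$.

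For the ``if'' direction I would construct $A$ as the Landstad algebra
\[
A := \bigl\{\, m\in M(B) : \delta(m)=m\otimes 1,\ s\mapsto u_s m u_s^*\ \text{is norm-continuous},\ \tilde u(z)m,\ m\tilde u(z)\in B\ \text{for all}\ z\in\csg \,\bigr\},
\]
where $\tilde u\colon\csg\to M(B)$ is the integrated form of $u$. The routine part is to check that $A$ is a nondegenerate $C^*$-subalgebra of $M(B)$ and that $\alpha_s:=\ad u_s$ restricts to a strongly continuous action of $G$ on $A$ (the strong continuity being precisely the second defining condition). The inclusion $\iota\colon A\hookrightarrow M(B)$ and $u$ then form a covariant homomorphism of $(A,\alpha)$, with integrated form $\iota\times u\colon A\times_\alpha G\to M(B)$. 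Since $\delta$ fixes $A$ and $\delta(u_s)=u_s\otimes s$, a check on generators shows $\iota\times u$ is $\what\alpha$--$\delta$ equivariant; because $\delta$ is normal, the universal property of the normalization map $q^n$ lets $\iota\times u$ factor through $q^n$ as a $\what\alpha^n$--$\delta$ equivariant homomorphism $\bar\theta\colon A\times_{\alpha,r}G\to M(B)$, and I would take $\theta:=\bar\theta\inv$.

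The heart of the matter, and the step I expect to be the main obstacle, is showing that $\bar\theta$ is an isomorphism onto $B$. Surjectivity reduces to the generation statement $\clspn\{A\,\tilde u(\csg)\}=B$, and proving that the third Landstad condition makes $A$ large enough for this is the genuinely technical point: I would use a slice-map/averaging argument in which $\delta$ projects $M(B)$ onto its $\delta$-invariants, thereby extracting elements of $A$ from arbitrary elements of $B$. Injectivity of $\bar\theta$ is where normality of $\delta$ is used decisively; here I would build a faithful conditional expectation of $B$ onto $A$ out of $\delta$, intertwined by $\bar\theta$ with the canonical faithful expectation of $A\times_{\alpha,r}G$ onto $i_A^r(A)$, and, since $\bar\theta$ is already an isomorphism on these images, conclude $\ker\bar\theta=0$.

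Finally, for~(2), the Landstad algebra of the triple $(C\times_{\beta,r}G,\,i_G^{\beta,r},\,\what\beta^n)$ is exactly $i_C^{\beta,r}(C)\cong C$, by the construction above applied to $(C,\beta)$. Since $\sigma$ (extended to multipliers) intertwines $\delta$ with $\what\beta^n$ and $u$ with $i_G^{\beta,r}$, it carries each of the three defining conditions for $A$ to the corresponding conditions for $i_C^{\beta,r}(C)$, hence restricts to an isomorphism $A\iso i_C^{\beta,r}(C)$; composing with $(i_C^{\beta,r})\inv$ yields $\phi\colon A\to C$. Equivariance of $\phi$ follows because $\alpha=\ad u|_A$, $\beta=\ad i_G^{\beta,r}|_C$, and $\sigma$ intertwines $u$ with $i_G^{\beta,r}$, so that $\phi\times_r G$ is defined. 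Lastly $(\phi\times_r G)\circ\theta$ and $\sigma$ are homomorphisms that agree on the generators $A$ and $u(G)$ of $B$, so they coincide by the generation property of the crossed product.
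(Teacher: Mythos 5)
Your argument is correct in outline, but it is essentially Landstad's original direct proof (the one carried out in the cited references \cite{lan:dual} and \cite{kqfulllandstad}), whereas these notes take a different route: the theorem is never proved directly here, but is instead \emph{deduced} from the categorical version (\thmref{landstad category}) in the subsection on ``deducing'' classical Landstad duality from CLDA. There the ``only if'' half of (1) is done by transport of structure exactly as you do it; the ``if'' half is obtained by quoting essential surjectivity of the functor $\rcp$; and clause (2) falls out of fullness of $\rcp$ together with the fact that an equivalence reflects isomorphisms. Your proposal opens up the black box behind that deduction: the Landstad algebra with its three conditions is precisely the engine driving essential surjectivity, and your observation that this algebra is defined \emph{intrinsically} in terms of the triple $(B,u,\delta)$ --- so that any isomorphism of triples carries one Landstad algebra onto the other --- is exactly what makes $\rcp$ full (it is step (1) in the proof outline of \thmref{actionnormal}, where one shows $\sigma\circ i_A^r(A)\subseteq i_B^r(B)$). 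The categorical route buys an economical packaging of the ``moreover'' and uniqueness clauses; your route buys an explicit construction of $(A,\alpha)$.

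Two points in your sketch deserve more care. First, for non-discrete $G$ there is no everywhere-defined conditional expectation of $B$ onto $A$, so injectivity of $\bar\theta$ should instead be run through the family of slice maps $(\id\otimes f)\circ\delta$ for $f$ in the Fourier--Stieltjes algebra: joint faithfulness of this family (equivalently, injectivity of $(\id\otimes\lambda)\circ\delta$) is one formulation of normality, and it is intertwined by $\bar\theta$ with the corresponding faithful family for $\what\alpha^n$. Second, in part (2) the identification of the Landstad algebra of $(C\times_{\beta,r}G,\,i_G^{\beta,r},\,\what\beta^n)$ with $i_C^r(C)$ does not follow formally from ``the construction applied to $(C,\beta)$'': the inclusion $i_C^r(C)\subseteq A'$ is a routine check of the three conditions, but the reverse inclusion is itself one of the genuinely technical steps of Landstad's argument and needs the same averaging machinery you invoke for surjectivity.
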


Ok, that was a mouthful;
the ``moreover'' and 
the uniqueness clause (2) are particularly cumbersome.
In the next talk we'll restate Landstad's theorem in categorical language, and one benefit will be a much more economical formulation of the uniqueness.


\newpage

\section{Categorical Landstad duality for reduced crossed products}
\label{lecture 2}

Here is the categorical version of Landstad's theorem:

\begin{thm}
[{\cite[Theorem~4.1]{clda}}]
\label{landstad category}
The functor
\begin{align*}
(A,\alpha)&\mapsto (A\times_{\alpha,r} G,\what\alpha^n,i_G^r)\\
\phi&\mapsto \phi\times_r G
\end{align*}
is an equivalence from the category 
of actions \(of $G$\)
to the comma category 
of normal coactions \(of $G$\)
under the canonical coaction $\delta_G$ on $C^*(G)$.
\end{thm}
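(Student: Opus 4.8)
The plan is to show that the functor---call it $F$---is an equivalence by verifying that it is essentially surjective and fully faithful, with each clause of \thmref{landstad normal} feeding a different part of this. First I would unwind the target category. A morphism in the category of coactions from $(\csg,\delta_G)$ to $(B,\delta)$ is a $\delta_G$--$\delta$ equivariant nondegenerate homomorphism $\csg\to M(B)$; by the universal property of $\csg$ (\exref{cstarG}) such a homomorphism is the same datum as a strictly continuous unitary homomorphism $u\colon G\to M(B)$, and since $\delta_G(s)=s\otimes s$ the equivariance condition becomes exactly $\delta(u_s)=u_s\otimes s$. Hence an object of the comma category is precisely a triple $(B,\delta,u)$ with $\delta$ a normal coaction and $u$ as in \thmref{landstad normal}, and a morphism $(B,\delta_B,u_B)\to(C,\delta_C,u_C)$ is a $\delta_B$--$\delta_C$ equivariant homomorphism intertwining $u_B$ with $u_C$. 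With this identification I would check that $F$ lands in the comma category and is functorial: $\what\alpha^n$ is normal by \exref{normal}, the pair $(\what\alpha^n,i_G^r)$ satisfies $\what\alpha^n(i_G^r(s))=i_G^r(s)\otimes s$ by \eqref{alpha hat G}, and a quick check on the generators $i_A^r(a),i_G^{\alpha,r}(s)$ shows that $\phi\times_r G$ is $\what\alpha^n$--$\what\beta^n$ equivariant and intertwines $i_G^{\alpha,r}$ with $i_G^{\beta,r}$; preservation of composition and identities is routine.

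Essential surjectivity is then immediate from clause~(1) of \thmref{landstad normal}, including its ``moreover'': given an object $(B,\delta,u)$, the theorem produces an action $(A,\alpha)$ together with an isomorphism $\theta\colon B\to A\times_{\alpha,r}G$ that is $\delta$--$\what\alpha^n$ equivariant and satisfies $\theta\circ u=i_G^{\alpha,r}$, which is exactly an isomorphism $(B,\delta,u)\iso F(A,\alpha)$ in the comma category. Faithfulness is easy: if $\phi_1\times_r G=\phi_2\times_r G$, then evaluating on $i_A^r(a)$ gives $i_C^r(\phi_1(a))=i_C^r(\phi_2(a))$, and since the canonical map into a reduced crossed product is injective we conclude $\phi_1=\phi_2$.

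The main obstacle is fullness. Given a comma morphism $\psi\colon F(A,\alpha)\to F(C,\beta)$, I must manufacture an equivariant $\phi\colon A\to M(C)$ with $\phi\times_r G=\psi$. The tool is Landstad's intrinsic description of $A$: under the identification $a\mapsto i_A^r(a)$, the algebra $A$ is recovered as the Landstad subalgebra of $M(A\times_{\alpha,r}G)$ consisting of those multipliers $m$ that are fixed by $\what\alpha^n$ (that is, $\what\alpha^n(m)=m\otimes1$), that multiply $i_G^r(\csg)$ back into the crossed product, and for which $s\mapsto i_G^r(s)\,m\,i_G^r(s)^*$ is norm continuous---this is precisely the recovery answering \qref{recover}. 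Because $\psi$ is $\what\alpha^n$--$\what\beta^n$ equivariant and intertwines the two copies of $i_G^r$, its multiplier extension carries each of these three conditions to the corresponding one for $(C\times_{\beta,r}G,\what\beta^n,i_G^{\beta,r})$, so $\psi$ restricts to a map of Landstad subalgebras, which under the identifications is the desired $\phi$; one then checks $\phi\times_r G=\psi$ on generators. The delicate point is the multiplier-level bookkeeping---verifying that the containment condition survives even though $\psi$ only maps the crossed product into a \emph{multiplier} algebra---and this step amounts to a functorial re-run of Landstad's recovery, with the isomorphism case already supplied by clause~(2) of \thmref{landstad normal}. Once $F$ is fully faithful and essentially surjective, it is an equivalence.
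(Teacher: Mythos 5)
Your proposal is correct and follows essentially the same route as the paper: reduce to showing the functor is full, faithful, and essentially surjective, obtain essential surjectivity from clause (1) of classical Landstad duality (Theorem~\ref{landstad normal}) including its ``moreover,'' and establish fullness by using Landstad's recovery of $A$ as the generalized fixed-point subalgebra of $M(A\times_{\alpha,r}G)$ and checking that a comma-category morphism preserves the three Landstad conditions. The one delicate point you flag---the multiplier-level bookkeeping in transporting the Landstad conditions along $\sigma$---is exactly where the paper also concentrates the work (``heavy use of Landstad's techniques''), so nothing essential is missing.
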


\begin{qs}\

\begin{enumerate}
\item What does that all mean?

\item Why is categorical language appropriate here?
\end{enumerate}
\end{qs}

We'll answer the above two questions out of order.
For the second question, 
recall that Landstad duality tells us that we can recover
\sknote{Again with the ``recover''.  I guess I feel ``recovery'' should
involve an explicit construction, not just an abstract proof of existence.}
$(A,\alpha)$ up to isomorphism from the data $(A\times_{\alpha,r} G,\what\alpha^n,i_G^r)$.
Now, it transpires that often it's quite cumbersome to make precise what ``up to isomorphism'' means (e.g., have a look at the uniqueness clause in Landstad's theorem).
Here is where category theory can help: it turns out that there's a category of actions and a category of normal coactions, and we can augment the latter category so that the above data gives an equivalence between these categories, and moreover from this we can recover Landstad duality, plus more.

But what does it mean to say that the crossed-product functor
$\cp$ from actions to the comma category is an equivalence?
It would be an \emph{isomorphism} of categories if there were a functor $F$ in the opposite direction\footnote{and we won't need to contemplate what $F$ might be}
such that
each composition $\cp\circ F$ and $F\circ\cp$ is the identity functor.
But it turns out that this notion is not very useful\footnote{in the sense that, although some categories arising in nature are actually isomorphic, this doesn't happen often enough to be very interesting}; it's much better to ask,
for each action $(A,\alpha)$,
not that 
$F\circ\cp(A,\alpha)$ 
and 
$(A,\alpha)$
coincide, but rather that they be isomorphic, 
and similarly 
for the composition $\cp\circ F$.
However\footnote{and here's the categorical perspective}, 
this is not quite enough: 
the crucial thing is that we want these isomorphisms to be ``natural''.
This means that we want isomorphisms $\theta$ making the diagrams
\[
\xymatrix@C+30pt{
(A,\alpha)
\ar[r]^\phi \ar[d]_\theta^\cong
&(B,\beta) \ar[d]^\theta_\cong
\\
F\circ\cp(A,\alpha)
\ar[r]_{F\circ\cp(\phi)}
&F\circ\cp(B,\beta)
}
\]
commute, for every morphism
$\phi$ of actions.
Then we say $F\circ\cp$ is \emph{naturally isomorphic} to the identity functor.
And we similarly want $\cp\circ F\cong\id$.
When such a functor $F$ exists, we say that $\cp$ is an \emph{equivalence} of categories. 

What are the categories of interest to us here?
We start with

\begin{defn}
The \emph{basic $C^*$-category} \cs\ has:
\begin{enumerate}
\item objects: $C^*$-algebras;

\item morphisms: nondegenerate homomorphisms into multiplier algebras.
\end{enumerate}
\end{defn}

In more detail: if $A$ and $B$ are objects in \cs\ (i.e., they are $C^*$-algebras), then a \emph{morphism} $\phi:A\to B$ in \cs\ is a nondegenerate homomorphism $\phi:A\to M(B)$.
Here \emph{nondegenerate} means $\phi(A)B=B$.
It turns out that the naive idea of using ordinary homomorphisms between the $C^*$-algebras themselves is too restrictive for many purposes.
Of course, one should check that \cs\ really is a category, i.e., that it has identity morphisms (obvious) and that morphisms can be composed (not so obvious, but not deep).

A fundamental property of the basic category is that its isomorphisms are familiar:

\begin{lem}
A morphism $\phi:A\to B$ in the category \cs\ is an isomorphism if and only if $\phi$ maps $A$ into $B$ and is an isomorphism of $C^*$-algebras in the usual sense.
\end{lem}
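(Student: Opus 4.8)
The plan is to prove both implications, using throughout the standard fact that a nondegenerate homomorphism $\phi:A\to M(B)$ extends uniquely to a strictly continuous unital homomorphism $\bar\phi:M(A)\to M(B)$, and that this extension is exactly how morphisms in \cs\ are composed: for morphisms $\phi:A\to B$ and $\psi:B\to C$ the composite $\psi\circ\phi$ is $\bar\psi\circ\phi$, while the identity morphism $\id_A$ is the canonical inclusion $\iota_A:A\hookrightarrow M(A)$.

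The reverse implication is routine: if $\phi$ maps $A$ onto $B$ and is a $C^*$-isomorphism in the usual sense, then it is nondegenerate (since it is surjective), hence a morphism in \cs; its ordinary inverse $\phi\inv:B\to A$ is likewise a morphism, and because both maps take values in the algebras themselves the two categorical composites reduce to $\id_A$ and $\id_B$.

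For the forward implication, let $\psi:B\to A$ be a morphism inverse to $\phi$, so that $\bar\psi\circ\phi=\iota_A$ and $\bar\phi\circ\psi=\iota_B$. The key step, which I expect to be the main obstacle, is to upgrade $\phi(A)\subseteq M(B)$ to $\phi(A)\subseteq B$. To do this, fix $a\in A$ and use nondegeneracy of $\psi$ (i.e.\ $\psi(B)A=A$) to write $a$ as a finite sum of products $\psi(b)\,a'$ with $b\in B$, $a'\in A$; it suffices to treat one such term. Applying $\phi$ and moving it across the product by means of its multiplier extension gives
\[
\phi(\psi(b)\,a')=\bar\phi(\psi(b))\,\phi(a')=b\,\phi(a'),
\]
where the last equality is precisely the relation $\bar\phi\circ\psi=\iota_B$. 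Since $b\in B$, $\phi(a')\in M(B)$, and $B$ is an ideal in $M(B)$, the product lies in $B$; hence $\phi(a)\in B$. The symmetric argument gives $\psi(B)\subseteq A$.

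Once both maps land in the algebras, the conclusion is immediate: the relations $\bar\psi\circ\phi=\iota_A$ and $\bar\phi\circ\psi=\iota_B$ restrict to $\psi\circ\phi=\id_A$ and $\phi\circ\psi=\id_B$ as ordinary maps $A\rightleftarrows B$, so $\phi$ and $\psi$ are mutually inverse $*$-homomorphisms and $\phi$ is a $C^*$-isomorphism in the usual sense. The delicate point throughout is the factorization step: because $\psi(b)\in M(A)$ rather than $A$, passing $\phi$ through the product genuinely requires the extension $\bar\phi$ and the composition identity $\bar\phi\circ\psi=\iota_B$, not merely the homomorphism property of $\phi$ on $A$.
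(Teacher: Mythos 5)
Your proof is correct, and the key step is exactly the right one: using nondegeneracy of the inverse morphism together with the strict extension $\bar\phi$ to show $\phi(A)\subseteq B$ before restricting the multiplier-level identities to the algebras themselves. The paper states this lemma without proof, so there is nothing to compare against; your argument is the standard one that the authors presumably had in mind.
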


We need equivariant versions of the basic category; here is the appropriate version for actions:

\begin{defn}
The category \act\ has:
\begin{enumerate}
\item objects: actions of $G$;

\item morphisms: equivariant morphisms in \cs.
\end{enumerate}
\end{defn}

In more detail: if $(A,\alpha)$ and $(B,\beta)$ are actions of $G$, then a \emph{morphism} $\phi:(A,\alpha)\to (B,\beta)$ in \act\ is a morphism $\phi:A\to B$ in \cs\ that is \emph{$\alpha-\beta$ equivariant} in the sense that
\[
\phi\circ\alpha_s=\beta_s\circ\phi\midtext{for all}s\in G.
\]

Dually, the appropriate equivariant category for coactions is:

\begin{defn}
The category \coact\ has:
\begin{enumerate}
\item objects: coactions of $G$;

\item morphisms: equivariant morphisms in \cs.
\end{enumerate}
\end{defn}

In more detail: if $(A,\delta)$ and $(B,\epsilon)$ are coactions of $G$, then a \emph{morphism} $\phi:(A,\delta)\to (B,\epsilon)$ in \coact\ is a morphism $\phi:A\to B$ in \cs\ that is \emph{$\delta-\epsilon$ equivariant} in the sense that
the diagram
\[
\xymatrix{
A \ar[r]^-\delta \ar[d]_\phi
&A\otimes C^*(G) \ar[d]^{\phi\otimes\id}
\\
B \ar[r]_-\epsilon
&B\otimes C^*(G)
}
\]
commutes in \cs,
which makes sense because
\[
\phi\otimes\id:A\otimes C^*(G)\to B\otimes C^*(G)
\]
is a morphism in \cs.

As mentioned in the first lecture, some coactions are \emph{normal}, in the sense that the canonical morphism $j_A:A\to A\times_\delta G$ in \cs\ is a monomorphism, and these form a category:

\begin{defn}
The category \ncoact\ is the full subcategory of \coact\ whose objects are the normal coactions of $G$.
\end{defn}

What are the ``crossed-product functors''?

\begin{defn}
The functor
\[
\cp:\act\to\coact
\]
takes each action $(A,\alpha)$ to the dual coaction $(A\times_\alpha G,\what\alpha)$,
and each morphism $\phi:(A,\alpha)\to (B,\beta)$ in \act\ to the morphism
\[
\phi\times G:=(i_B\circ\phi)\times i_G:
(A\times_\alpha G,\what\alpha)\to (B\times_\beta G,\what\beta)
\]
in \coact.
\end{defn}

For Landstad's theorem, we need to use reduced crossed products:

\begin{defn}
The functor
\[
\rcp:\act\to\ncoact
\]
takes each action $(A,\alpha)$ to the dual coaction $(A\times_{\alpha,r} G,\what\alpha^n)$,
and each morphism $\phi:(A,\alpha)\to (B,\beta)$ in \act\ to the morphism
\[
\phi\times_r G:=(i_B^r\circ\phi)\times i_G^r:
(A\times_{\alpha,r} G,\what\alpha^n)\to (B\times_{\beta,r} G,\what\beta^n)
\]
in \ncoact.
\end{defn}

But in \thmref{landstad category} we need this functor to take values in an ``enhanced'' category, not just \ncoact\ ---
namely, we need our functor to keep track of $i_G^r$.
Well, the unitary homomorphism $i_G^r:G\to M(A\times_{\alpha,r} G)$ corresponds to a morphism
\[
i_G^r:C^*(G)\to A\times_{\alpha,r} G\midtext{in}\cs,
\]
which turns out to have a special property:
it follows from the definition of the dual coaction
--- see \eqref{alpha hat G} ---
that $i_G^r$ is $\delta_G-\what\alpha^n$ equivariant, 
i.e., gives a morphism in \coact: 
\[
i_G^r:(C^*(G),\delta)\to (A\times_{\alpha,r} G,\what\alpha^n).
\]


Anyway, we now have a category, namely \coact, and a specific object in that category, namely $(C^*(G),\delta_G)$, and we are considering morphisms, of the form $i_G^r$, from that object into a particular subcategory, namely \ncoact.
This turns out to be an example of something category theorists call a \emph{comma category}.

In our case the comma category is
\[
(C^*(G),\delta_G)\dn\ncoact,
\]
with
\begin{enumerate}
\item objects: morphisms in \coact\ from $(C^*(G),\delta_G)$ to objects in the subcategory \ncoact;

\item morphisms: morphisms $\sigma$ in \coact\ making a diagram of the form
\[
\xymatrix{
&(C^*(G),\delta_G) 
\ar[dl]_\phi 
\ar[dr]^\psi
\\
(A,\delta) 
\ar[rr]_\sigma
&&(B,\epsilon)
}
\]
commute.
\end{enumerate}

We'll write an object in the comma category $(C^*(G),\delta_G)\dn\ncoact$  as a triple $(A,\delta,\phi)$, where $\phi:(C^*(G),\delta_G)\to (A,\delta)$.

Thus, we now see that, given an action $(A,\alpha)$, the triple $(A\times_{\alpha,r} G,\what\alpha^n,i_G^r)$ is an object in $(C^*(G),\delta_G)\dn\ncoact$, and so finally we can understand the functor in \thmref{landstad category}, and the assertion is that it is a category equivalence.
Here is a re-statement of Theorem~\ref{landstad category} in the notation we've introduced:

\begin{thm}
The functor
\begin{align*}
\rcp:(A,\alpha)&\mapsto (A\times_{\alpha,r} G,\what\alpha^n,i_G^r)\\
\phi&\mapsto \phi\times_r G
\end{align*}
is an equivalence from \act\ to the comma category $(C^*(G),\linebreak[0]\delta_G)\dn \ncoact$.
\end{thm}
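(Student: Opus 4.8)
The plan is to show that $\rcp$ is essentially surjective, faithful, and full; since these three properties together characterize an equivalence of categories, the theorem follows, and a quasi-inverse functor $F$ (given by Landstad's recovery construction) can then be extracted formally. The first order of business is to translate objects of the comma category into the data appearing in \thmref{landstad normal}. An object $(B,\delta,\phi)$ consists of a normal coaction $(B,\delta)$ together with a morphism $\phi\colon(C^*(G),\delta_G)\to(B,\delta)$ in \coact; by the universal property of $C^*(G)$, the \cs-morphism $\phi\colon C^*(G)\to M(B)$ is the integrated form of a strictly continuous unitary homomorphism $u\colon G\to M(B)$, and unpacking the $\delta_G-\delta$ equivariance of $\phi$ on the generators $s\in G\subseteq M(C^*(G))$ yields exactly $\delta(u_s)=u_s\otimes s$. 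Thus objects of $(C^*(G),\delta_G)\dn\ncoact$ correspond bijectively to the Landstad data $(B,u,\delta)$ of \thmref{landstad normal}.

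Essential surjectivity is then immediate from part~(1) of \thmref{landstad normal}: given $(B,\delta,\phi)$ with associated $u$, that theorem supplies an action $(A,\alpha)$ and a $\delta-\what\alpha^n$ equivariant isomorphism $\theta\colon B\to A\times_{\alpha,r}G$ with $\theta\circ u=i_G^{\alpha,r}$. Reading the last equation back through the universal property gives $\theta\circ\phi=i_G^r$, so $\theta$ is precisely an isomorphism in the comma category from $(B,\delta,\phi)$ onto $\rcp(A,\alpha)=(A\times_{\alpha,r}G,\what\alpha^n,i_G^r)$. Faithfulness follows from normality: if $\phi,\phi'\colon(A,\alpha)\to(C,\beta)$ satisfy $\phi\times_r G=\phi'\times_r G$, then evaluating on the generators $i_A^r(a)$ gives $i_C^r(\phi(a))=i_C^r(\phi'(a))$ for all $a\in A$, and since $\what\beta^n$ is normal the canonical map $i_C^r=j_C$ is injective, so $\phi=\phi'$.

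The substantive step, and the one I expect to be the main obstacle, is fullness. Suppose $\sigma\colon\rcp(A,\alpha)\to\rcp(C,\beta)$ is a morphism in the comma category, i.e.\ a $\what\alpha^n-\what\beta^n$ equivariant \cs-morphism with $\sigma\circ i_G^{\alpha,r}=i_G^{\beta,r}$. The idea is to recover $\phi$ using Landstad's intrinsic description of the embedded algebra: the image $i_A^r(A)\subseteq M(A\times_{\alpha,r}G)$ is characterized purely in terms of $\what\alpha^n$ and $i_G^r$ by the Landstad conditions (invariance under $\what\alpha^n$, the slice-product condition against $i_G^r(C^*(G))$, and the norm-continuity condition). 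Because $\sigma$ intertwines the two dual coactions and carries $i_G^{\alpha,r}$ to $i_G^{\beta,r}$, its canonical extension to multiplier algebras sends elements satisfying these conditions on the $A$-side to elements satisfying them on the $C$-side, and hence maps $i_A^r(A)$ into $i_C^r(M(C))$. This defines a homomorphism $\phi\colon A\to M(C)$ by $i_C^r\circ\phi=\sigma\circ i_A^r$.

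It then remains to verify that $\phi$ is a nondegenerate, $\alpha-\beta$ equivariant morphism in \cs, and finally that $\phi\times_r G=\sigma$, which is checked on the two families of generators $i_A^r(a)$ and $i_G^r(s)$. The delicate points here are the passage to multiplier algebras and the verification that $\sigma$ genuinely preserves the continuity condition; once these are in hand the equality on generators is routine, and the three properties together yield the desired equivalence.
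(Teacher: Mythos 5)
Your proposal is correct and follows essentially the same route as the paper: the reduction to full, faithful, and essentially surjective; essential surjectivity from classical Landstad duality; and fullness via Landstad's intrinsic characterization of $i_A^r(A)$ in terms of the dual coaction and the unitary homomorphism, exactly as in the paper's outline (which defers the same technical steps to \cite{clda} and \cite{lan:dual}). The only cosmetic difference is that you split off faithfulness as a separate argument from injectivity of $i_C^r$ (which holds for any reduced crossed product; your attribution of this to normality of $\what\beta^n$ via ``$i_C^r=j_C$'' is not quite the right justification), whereas the paper packages fullness and faithfulness together as existence and uniqueness of $\phi$.
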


\newpage

\section{Proofs of CLDA, and CLDC}
\label{lecture 3}

\subsection{CLDA}

At the end of the last talk, we saw the statement of CLDA
(Categorical Landstad Duality for Actions).
Now we'll indicate the idea of the proof,
after which we'll see the ``dual'' version CLDC
(Categorical Landstad Duality for Coactions).

\begin{thm}
\label{actionnormal}
The functor
\begin{align*}
\rcp:(A,\alpha)&\mapsto (A\times_{\alpha,r} G,\what\alpha^n,i_G^r)\\
\phi&\mapsto \phi\times_r G
\end{align*}
is an equivalence from \act\ to the comma category $(C^*(G),\linebreak[0]\delta_G)\dn \ncoact$.
\end{thm}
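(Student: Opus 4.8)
The plan is to show that $\rcp$ is an equivalence by establishing that it is essentially surjective and fully faithful. Essential surjectivity and faithfulness will follow almost directly from the classical theorem (\thmref{landstad normal}); fullness is the step that requires genuine work.

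First I would translate the comma category into the language of \thmref{landstad normal}. An object of $(C^*(G),\delta_G)\dn\ncoact$ is a triple $(B,\delta,\phi)$ with $(B,\delta)$ a normal coaction and $\phi\colon(C^*(G),\delta_G)\to(B,\delta)$ a morphism in \coact. By the universal property of $C^*(G)$ (\exref{cstarG}), a nondegenerate homomorphism $\phi\colon C^*(G)\to M(B)$ is the same datum as a strictly continuous unitary homomorphism $u\colon G\to M(B)$, via $u_s=\phi(s)$; and evaluating the equivariance condition $\delta\circ\phi=(\phi\otimes\id)\circ\delta_G$ on a group element $s$, where $\delta_G(s)=s\otimes s$, shows that $\phi$ is $\delta_G-\delta$ equivariant if and only if $\delta(u_s)=u_s\otimes s$ for all $s\in G$. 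Thus objects of the comma category are exactly the data $(B,\delta,u)$ appearing in the hypotheses of \thmref{landstad normal}(1). Essential surjectivity is then immediate: such a triple yields, by \thmref{landstad normal}(1), an action $(A,\alpha)$ and an isomorphism $\theta\colon B\to A\times_{\alpha,r}G$ that is $\delta-\what\alpha^n$ equivariant with $\theta\circ u=i_G^{\alpha,r}$, which is precisely an isomorphism in the comma category from $(B,\delta,u)$ onto $\rcp(A,\alpha)=(A\times_{\alpha,r}G,\what\alpha^n,i_G^{\alpha,r})$. Faithfulness is equally quick: for $\phi\colon(A,\alpha)\to(C,\beta)$ the defining relation $(\phi\times_rG)\circ i_A^{\alpha,r}=i_C^{\beta,r}\circ\phi$, together with injectivity of $i_C^{\beta,r}$ (which holds because the dual coaction is normal, see \exref{normal}), recovers $\phi$ from $\phi\times_rG$.

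The main obstacle is fullness. Given a comma-category morphism $\sigma\colon(A\times_{\alpha,r}G,\what\alpha^n,i_G^{\alpha,r})\to(C\times_{\beta,r}G,\what\beta^n,i_G^{\beta,r})$, I must produce $\phi\colon(A,\alpha)\to(C,\beta)$ with $\phi\times_rG=\sigma$. The crucial point is that the copy $i_A^{\alpha,r}(A)$ sitting inside $M(A\times_{\alpha,r}G)$ is cut out intrinsically by Landstad-type conditions phrased entirely in terms of the dual coaction $\what\alpha^n$ and the homomorphism $i_G^{\alpha,r}$ — this is exactly the recovery underlying the answer to \qref{recover}. Since $\sigma$ is $\what\alpha^n-\what\beta^n$ equivariant and satisfies $\sigma\circ i_G^{\alpha,r}=i_G^{\beta,r}$, its canonical extension to multiplier algebras carries Landstad elements to Landstad elements, so $\sigma\bigl(i_A^{\alpha,r}(A)\bigr)\subseteq i_C^{\beta,r}(M(C))$. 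Using injectivity of $i_C^{\beta,r}$ I can then define $\phi:=(i_C^{\beta,r})^{-1}\circ\sigma\circ i_A^{\alpha,r}\colon A\to M(C)$, and I expect the verification that $\phi$ is a nondegenerate $\alpha-\beta$ equivariant homomorphism to be routine bookkeeping.

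Finally, $\phi\times_rG=\sigma$ should follow because both morphisms agree on $i_A^{\alpha,r}(A)$ by the construction of $\phi$ and on $i_G^{\alpha,r}(C^*(G))$ by the comma condition, and these two sets generate $A\times_{\alpha,r}G$. The delicate part throughout is confirming that the Landstad characterization of the recovered subalgebra is expressible using only the pair $(\what\alpha^n,i_G^{\alpha,r})$, so that the comma-category structure is precisely what forces $\sigma$ to preserve that subalgebra; once this is in hand, the equivalence (and, if one prefers, an explicit quasi-inverse sending $(B,\delta,u)$ to its algebra of Landstad elements with the recovered action) drops out.
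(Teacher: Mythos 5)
Your proposal follows essentially the same route as the paper's own outline: reduce to fullness, faithfulness, and essential surjectivity; get essential surjectivity (and, after translating comma-category objects into the data $(B,\delta,u)$) from classical Landstad duality; and for fullness show $\sigma\bigl(i_A^{\alpha,r}(A)\bigr)\subseteq i_C^{\beta,r}(M(C))$ via Landstad's characterization of the fixed subalgebra in terms of $(\what\alpha^n,i_G^{\alpha,r})$, then define $\phi$ by composing with $(i_C^{\beta,r})^{-1}$ and check equivariance, nondegeneracy, and agreement on generators. This matches the strategy of \cite{clda} as described in the lecture, so no further comparison is needed.
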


\begin{proof}
[Outline of proof]
We have to begin by choosing a strategy for proving that a functor is an equivalence. The definition would require us to find a \emph{quasi-inverse} functor going the other way, and verify that both compositions are naturally isomorphic to the identity functor.
This adds up to a lot of work, and fortunately we can choose among various short-cuts.
This time we apply a standard result from category theory (and for all this we recommend \cite{maclane}): it suffices to show that $\rcp$ is \emph{full}, \emph{faithful}, and \emph{essentially surjective}.

Essential surjectivity means that every object in $(C^*(G),\d_G)\dn \ncoact$ is isomorphic to one in the image of $\rcp$,
and this is part of the content of classical Landstad duality, which we stated at the end of the first lecture.

Fullness means that for all actions $(A,\alpha)$ and $(B,\beta)$, 
the functor $\rcp$ maps the set of morphisms $\mor((A,\alpha),(B,\beta))$ 
\emph{onto} the set of morphisms $\mor(\rcp(A,\alpha),\rcp(B,\beta))$, 
and faithfulness means that the same map is injective. 
Together, fullness and faithfulness provide
a generalization of the uniqueness clause in classical Landstad duality.

Thus, we need to show that if $(A,\alpha)$ and $(B,\beta)$ are actions and
\[\sigma:(A\times_{\alpha,r} G,\what\alpha^n,i^{\alpha,r}_G)\to
(B\times_{\beta,r} G,\what\beta^n,i^{\beta,r}_G)\]
is a morphism in $(C^*(G),\d_G)\dn \ncoact$, then there exists a
unique morphism
$\p:(A,\alpha)\to (B,\beta)$
in $\act$ such that $\s=\p\times_r G$.
Notice that if this were the case, we would have 
\[
\sigma\circ i_A^r = (\phi\times_r G)\circ i_A^r
= ((i_B^r\circ\phi)\times i_G^{\beta,r})\circ i_A^r
= i_B^r\circ\phi,
\]
so consider the diagram
\[\xymatrix{
A \ar[r]^-{i^r_A} \ar@{-->}[d]^{!}_\p
&M(A\times_{\alpha,r} G) \ar[d]^{\s}
\\
M(B) \ar[r]_-{i^r_B}
&M(B\times_{\beta,r} G).
}\]
Our strategy in \cite{clda} comprised the following steps:
\begin{enumerate}
\item Show that there is a unique homomorphism $\p$ making the diagram commute.
For this we observed that, since $i_B^r$ is faithful, it sufficed to show that
\[
\sigma\circ i_A^r(A)\subset i_B^r(M(B)),
\]
and for this we made heavy use of Landstad's techniques from \cite{lan:dual}.
The fact that $\sigma$ respects the dual coactions and unitary homomorphisms 
--- i.e., that $\sigma$ is a morphism in the comma category --- is critical here:
in general, not every nondegenerate homomorphism 
of $A\times_{\alpha,r}G$ into $B\times_{\beta,r}G$ is going to
be of the form $\phi\times_r G$.

\item Show that $\phi$ is $\alpha-\beta$ equivariant and nondegenerate
(here again it is critical that $\sigma$ is a morphism in the comma category);

\item Observe that by construction we have $\phi\times_r G=\sigma$.
\end{enumerate}
\end{proof}

\subsection{``Deducing'' classical Landstad duality from CLDA}
Of course, Landstad's original theorem was used heavily in our proof of CLDA,
but it is instructive to see how the classical version can be recovered
from the categorical version.

First suppose we have an isomorphism $\theta:B\to A\times_{\alpha,r} G$. Then the diagrams
\[
\xymatrix{
G \ar@{-->}[r]^-u \ar[dr]_(.4){i_G^r}
&M(B) \ar[d]^{\bar\theta}_\cong
\\
&M(A\times_{\alpha,r} G)
}
\]
and
\[
\xymatrix{
B \ar@{-->}[r]^-\delta \ar[d]_\theta^\cong
&M(B\otimes C^*(G)) \ar[d]^{\bar{\theta\otimes\id}}_\cong
\\
A\times_{\alpha,r} G \ar[r]_-{\what\alpha^n}
&M((A\times_{\alpha,r} G)\otimes C^*(G)),
}
\]
where $\bar\theta:M(B)\to M(A\times_{\alpha,r} G)$ is the canonical extension of $\theta$ to multipliers\footnote{whose existence is vouchsafed by nondegeneracy}, and similarly for $\bar{\theta\otimes\id}$,
can obviously be completed, giving the maps $u$ and $\delta$ with the required properties (and the coaction $\delta$ is normal because it's isomorphic to the normal coaction $\what\alpha^n$).

Conversely, given the existence of $u$ and $\delta$, 
by essential surjectivity of $\rcp$ we have 
$(A,\alpha)\in \AA(G)$
and an isomorphism
\[
\theta:(B,\delta,u)\to (A\times_{\alpha,r} G,\what\alpha^n,i_G^r)\midtext{in}(C^*(G),\delta_G)\dn\ncoact,
\]
i.e., the isomorphism $\theta:B\to A\times_{\alpha,r} G$ is $\delta-\what\alpha^n$ equivariant and satisfies $\theta\circ u=i_G^r$. Moreover, if we also have $(C,\beta)$ and $\sigma$, then the commutative (by construction!) diagram
\[
\xymatrix@C+30pt{
(B,\delta,u)
\ar[r]^-\theta_-\cong
\ar[dr]_\sigma^\cong
&(A\times_{\alpha,r} G,\what\alpha^n,i_G^r)
\ar[d]^{\sigma\circ\theta\inv}
\\
&(C\times_{\beta,r} G,\what\beta^n,i_G^r)
}
\]
in $(C^*(G),\delta_G)\dn\ncoact$ implies, by fullness, that there is a morphism $\phi:(A,\alpha)\to (C,\beta)$ in \act\ such that
\[
\sigma\circ\theta\inv=\phi\times_r G,
\]
so that
\[
(\phi\times_r G)\circ\theta=\sigma,
\]
and moreover $\phi$ is an isomorphism because $\phi\times_r G$ is, since \RCP\ is an equivalence.


\subsection{CLDC}

What about a dual version of Landstad duality, starting with coactions?
First, 
just as every crossed product by an action of an abelian group $G$ 
carries a dual action of the Pontryagin dual group $\what G$,
every coaction crossed product $A\times_\delta G$ carries a \emph{dual action}
$\what\delta\colon G\to \aut(A\times_\delta G)$
defined by
\[
\what\delta_s=j_A\times (j_G\circ\rt_s)\quad\text{for $s\in G$,}
\]
where $\rt$ is the action of $G$ on $C_0(G)$ by right translation:
\[
\rt_s(f)(t)=f(ts).
\]
Note that by definition, $\what\delta_s\circ j_G = j_G\circ \rt_s$
for all $s\in G$,
so $j_G\colon C_0(G) \to M(A\times_\delta G)$ is 
$\rt - \what\delta$ equivariant.

The noncategorical version 
of Landstad duality for coactions
should give a characterization (up to isomorphism) of crossed products by coactions,
together with a uniqueness clause.
The following theorem originally appeared in \cite[Theorem~3.3]{Q:landstad}. We've modified the original statement considerably, using modern notation and terminology, and structuring it to be parallel to classical Landstad duality for actions.

\begin{thm}
\label{landstad coaction modern}
Let $B$ be a $C^*$-algebra and let $G$ be a locally compact group. 
Then
\begin{enumerate}
\item
There exist a normal coaction $(A,\delta)$ of $G$ 
and an isomorphism $\theta\colon B\to A\times_\delta G$
if and only if 
there 
are
an action $\alpha$ of $G$ on $B$ and a $\rt-\alpha$ equivariant morphism $\mu:C_0(G)\to B$.

Moreover, we can choose $A$, $\delta$, and $\theta$ such that
$\theta$ is $\alpha-\what\delta$ equivariant and $\theta\circ\mu=j_G^\delta$.

\item If $(C,\epsilon)$ is another normal coaction 
and $\sigma:B\to C\times_\epsilon G$ is an $\alpha-\what\epsilon$ equivariant isomorphism such that $\sigma\circ\mu=j_G^\epsilon$, 
then there is a $\delta-\epsilon$ equivariant isomorphism $\phi:A\to C$ 
such that $(\phi\times G)\circ\theta=\sigma$.
\end{enumerate}
\end{thm}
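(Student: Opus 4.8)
The plan is to mirror the development of the action case: first establish the categorical Landstad duality for coactions (CLDC) and then deduce \thmref{landstad coaction modern} from it exactly as classical Landstad duality was deduced from CLDA. The categorical statement I would prove is that the crossed-product functor
\[
(A,\delta)\mapsto (A\times_\delta G,\what\delta,j_G^\delta),\qquad \phi\mapsto \phi\times G,
\]
is an equivalence from \ncoact\ to the comma category $(C_0(G),\rt)\dn\act$ under the object $(C_0(G),\rt)$ of \act. Here an object of $(C_0(G),\rt)\dn\act$ is a triple $(B,\alpha,\mu)$ with $\mu\colon(C_0(G),\rt)\to(B,\alpha)$ a morphism in \act, i.e.\ a $\rt-\alpha$ equivariant morphism $\mu\colon C_0(G)\to B$ in \cs; that the functor lands in this category is precisely the observation, recorded just before the theorem, that $j_G$ is $\rt-\what\delta$ equivariant. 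By the same standard criterion used for \thmref{actionnormal}, it suffices to prove this functor is full, faithful, and essentially surjective.

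Granting CLDC, the deduction is formal and parallels the subsection ``Deducing classical Landstad duality from CLDA.'' For the forward (``only if'') direction of (1), given a normal coaction $(A,\delta)$ and an isomorphism $\theta\colon B\to A\times_\delta G$, I would transport the dual action and the canonical morphism back along $\theta$, setting $\alpha_s=\bar\theta\inv\circ\what\delta_s\circ\bar\theta$ and $\mu=\bar\theta\inv\circ j_G^\delta$, where $\bar\theta$ is the canonical extension to multipliers; the $\rt-\alpha$ equivariance of $\mu$ is inherited from that of $j_G$. For the backward (``if'') direction together with the ``moreover'' clause, the triple $(B,\alpha,\mu)$ is an object of $(C_0(G),\rt)\dn\act$, so essential surjectivity produces a normal coaction $(A,\delta)$ and an isomorphism $\theta\colon(B,\alpha,\mu)\to(A\times_\delta G,\what\delta,j_G^\delta)$ in the comma category; unwinding the comma structure, $\theta$ is exactly the desired $\alpha-\what\delta$ equivariant isomorphism with $\theta\circ\mu=j_G^\delta$.

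The uniqueness clause (2) is where the categorical packaging pays off. Given another normal coaction $(C,\epsilon)$ and an $\alpha-\what\epsilon$ equivariant isomorphism $\sigma\colon B\to C\times_\epsilon G$ with $\sigma\circ\mu=j_G^\epsilon$, I would check that $\sigma\circ\theta\inv$ is an isomorphism in the comma category from $(A\times_\delta G,\what\delta,j_G^\delta)$ to $(C\times_\epsilon G,\what\epsilon,j_G^\epsilon)$: it is $\what\delta-\what\epsilon$ equivariant, since $\theta\inv$ is $\what\delta-\alpha$ equivariant and $\sigma$ is $\alpha-\what\epsilon$ equivariant, and it satisfies $(\sigma\circ\theta\inv)\circ j_G^\delta=\sigma\circ\mu=j_G^\epsilon$. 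By fullness of the CLDC functor there is a morphism $\phi\colon(A,\delta)\to(C,\epsilon)$ in \ncoact\ with $\phi\times G=\sigma\circ\theta\inv$, whence $(\phi\times G)\circ\theta=\sigma$; and $\phi$ is an isomorphism because $\phi\times G$ is one and an equivalence reflects isomorphisms.

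The real work, and the step I expect to be the main obstacle, is the essential surjectivity of CLDC: given $(B,\alpha,\mu)$, one must reconstruct a normal coaction $(A,\delta)$ together with an isomorphism $B\cong A\times_\delta G$ carrying $(\alpha,\mu)$ to $(\what\delta,j_G^\delta)$. This is the coaction analog of Landstad's \emph{tour de force} and is the original content of \cite{Q:landstad}: one carves out $A$ as the ``Landstad algebra'' inside $M(B)$, consisting of the $\alpha$-fixed multipliers satisfying the appropriate continuity and integrability conditions relative to $\mu$, equips $A$ with the coaction $\delta$ determined by $\alpha$, and proves that the associated canonical covariant pair integrates to an isomorphism $A\times_\delta G\to B$. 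I expect this analytic reconstruction to absorb essentially all of the difficulty; once it is in hand, fullness and faithfulness (surjectivity and injectivity of the functor on morphism sets, by a dual of the argument sketched for \thmref{actionnormal}, using that a comma-category morphism must respect both $\what\delta$ and $j_G$) are comparatively routine, and the formal deduction above then yields the theorem.
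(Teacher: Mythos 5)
Your overall strategy is sound, and the formal deductions from the categorical statement are carried out correctly: the transport of $(\what\delta,j_G^\delta)$ back along $\theta$ for the ``only if'' direction, the use of essential surjectivity for the ``if'' direction together with the ``moreover'' clause, and the use of fullness plus the fact that an equivalence reflects isomorphisms for clause (2) all check out, and you correctly locate the real analytic content in the reconstruction of $A$ inside $M(B)$. But your route differs from the paper's in two respects. First, the paper does not derive \thmref{landstad coaction modern} from CLDC at all: the noncategorical theorem comes first, proved directly in \cite{Q:landstad} by the fixed-point-algebra construction you sketch, and the categorical version is built afterwards on top of it. Second, and more substantively, the paper is explicit that the proof of CLDC is \emph{not} parallel to that of CLDA: rather than showing the crossed-product functor is full, faithful, and essentially surjective, the authors of \cite{cldx} construct an explicit quasi-inverse $\fix\colon(C_0(G),\rt)\dn\act\to\ncoact$ (obtained by averaging over the dual action) and verify that $\fix$ is full and faithful and that $\cp\circ\fix\cong\id$. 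The two criteria are logically interchangeable, but the paper's choice buys an explicit inverse functor --- the generalized fixed-point algebra --- which is precisely the object your essential-surjectivity step would have to produce anyway; your packaging hides it inside an existence claim, while theirs displays it. One small imprecision worth flagging: the coaction $\delta$ on the Landstad algebra is defined using $\mu$ (via a unitary multiplier built from $\mu$ and the canonical element of $M(C_0(G)\otimes C^*(G))$), not ``determined by $\alpha$'' as you write; the action $\alpha$ only carves out the algebra $A$ itself.
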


And now we'll state the categorical version (CLDC).  
First, the relevant functor:

\begin{defn}
The functor
\[
\cp:\coact\to\act
\]
takes each coaction $(A,\delta)$ to the 
dual action $(A\times_\delta G,\what\delta)$,
and each morphism $\phi:(A,\delta)\to (B,\epsilon)$ in \coact\ to the morphism
\[
\phi\times G:=(j_B\circ\phi)\times j_G:
(A\times_\delta G,\what\delta)\to (B\times_\epsilon G,\what\epsilon)
\]
in \act.
\end{defn}

\begin{thm}
[{\cite[Theorem~4.2]{cldx}}]
\label{landstad coaction category}
The functor
\begin{align*}
\cp:(A,\delta)&\mapsto (A\times_\delta G,\what\delta,j_G)\\
\phi&\mapsto \phi\times G
\end{align*}
is an equivalence from \ncoact\ to the comma category $(C_0(G),\rt)\dn\act$.
\end{thm}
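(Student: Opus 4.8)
The plan is to mirror, step for step, the strategy used for \thmref{actionnormal}, exploiting the duality between actions and coactions. Rather than producing a quasi-inverse functor by hand, I would invoke the standard criterion (see \cite{maclane}) that a functor between categories is an equivalence exactly when it is \emph{full}, \emph{faithful}, and \emph{essentially surjective}. So it suffices to verify these three properties for \cp.

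Essential surjectivity is precisely the noncategorical coaction Landstad duality. An object of $(C_0(G),\rt)\dn\act$ is a triple $(B,\alpha,\mu)$ in which $(B,\alpha)$ is an action and $\mu\colon(C_0(G),\rt)\to(B,\alpha)$ is a morphism in \act, i.e.\ $\mu\colon C_0(G)\to M(B)$ is a $\rt-\alpha$ equivariant morphism in \cs. By \thmref{landstad coaction modern}(1) there are a normal coaction $(A,\d)$ and an isomorphism $\theta\colon B\to A\times_\d G$ that is $\alpha-\what\d$ equivariant and satisfies $\theta\circ\mu=j_G^\d$; this $\theta$ is exactly an isomorphism $(B,\alpha,\mu)\iso\cp(A,\d)$ in the comma category, so \cp\ is essentially surjective.

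For fullness and faithfulness I would show that, for normal coactions $(A,\d)$ and $(B,\epsilon)$, every morphism
\[
\sigma\colon(A\times_\d G,\what\d,j_G^\d)\to(B\times_\epsilon G,\what\epsilon,j_G^\epsilon)
\]
in $(C_0(G),\rt)\dn\act$ has the form $\phi\times G$ for a \emph{unique} morphism $\phi\colon(A,\d)\to(B,\epsilon)$ in \ncoact. If such a $\phi$ exists then $(\phi\times G)\circ j_A=j_B\circ\phi$, so $\sigma\circ j_A=j_B\circ\phi$; this both forces $\phi$ and tells us how to build it, via the diagram
\[
\xymatrix{
A \ar[r]^-{j_A} \ar@{-->}[d]^{!}_\phi
&M(A\times_\d G) \ar[d]^{\sigma}
\\
M(B) \ar[r]_-{j_B}
&M(B\times_\epsilon G).
}
\]
Because $\epsilon$ is normal, $j_B\colon M(B)\to M(B\times_\epsilon G)$ is injective, so there is at most one such $\phi$, and it exists as soon as $\sigma\circ j_A(A)\subset j_B(M(B))$. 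To get this containment I would use the Landstad characterization of the image $j_B(M(B))$ inside $M(B\times_\epsilon G)$ in terms of the dual action $\what\epsilon$ and the embedding $j_G^\epsilon$ of $C_0(G)$ — the coaction version of Landstad's techniques from \cite{Q:landstad}. The hypotheses that $\sigma$ is $\what\d-\what\epsilon$ equivariant and satisfies $\sigma\circ j_G^\d=j_G^\epsilon$ are exactly what is needed to transport the $\d$-Landstad conditions satisfied by $j_A(a)$ to the $\epsilon$-Landstad conditions, placing $\sigma\circ j_A(a)$ in $j_B(M(B))$.

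Once $\phi$ is constructed, I would check — again using that $\sigma$ respects $\what\d$, $\what\epsilon$, $j_G^\d$, and $j_G^\epsilon$ — that $\phi$ is $\d-\epsilon$ equivariant and nondegenerate, hence a genuine morphism in \ncoact; and then that $\phi\times G$ and $\sigma$ agree on the generators $j_A(a)$ and $j_G(f)$, on the former by construction of $\phi$ and on the latter since $\sigma\circ j_G^\d=j_G^\epsilon$. This yields $\phi\times G=\sigma$ (fullness), while uniqueness of $\phi$ yields faithfulness. I expect the containment $\sigma\circ j_A(A)\subset j_B(M(B))$ to be the main obstacle: it is the one step that genuinely requires the coaction Landstad machinery rather than formal category theory, and it is where normality of the codomain coaction $\epsilon$ — guaranteeing that $j_B$ is faithful — is indispensable.
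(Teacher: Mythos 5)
Your outline is plausible and, if the key containment $\sigma\circ j_A(A)\subset j_B(M(B))$ can be pushed through, it would prove the theorem --- but it is genuinely \emph{not} the route taken in \cite{cldx}, and the lecture notes say so explicitly: the proof of CLDC is not parallel to that of CLDA. Instead of verifying that \cp\ itself is full, faithful, and essentially surjective, the paper constructs an explicit quasi-inverse functor $\fix\colon(C_0(G),\rt)\dn\act\to\ncoact$ (a generalized fixed-point-algebra functor, obtained by averaging over the dual action --- something available here precisely because one can average over an action of $G$, whereas it is unclear what averaging over a coaction would even mean), and then uses the complementary categorical shortcut: show that $\fix$ is full and faithful and that $\cp\circ\fix\cong\id$. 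The trade-offs are real. The paper's route exhibits a concrete inverse construction, which is of independent use (it is the engine behind the proper-action and fixed-point-algebra results of \cite{cldx}) and never requires proving fullness of \cp\ on non-invertible morphisms. Your route is more uniform with the proof of \thmref{actionnormal}, and your essential-surjectivity step via \thmref{landstad coaction modern}(1) is correct as stated; but your fullness step asks for strictly more than Quigg's original theorem provides --- the uniqueness clause \thmref{landstad coaction modern}(2) covers only equivariant \emph{isomorphisms}, so you would need to redo the Landstad-type analysis of the image $j_B(M(B))$ for an arbitrary comma-category morphism $\sigma$, transporting the fixed-point conditions for $\what\delta$ and $j_G^\delta$ to those for $\what\epsilon$ and $j_G^\epsilon$. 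That is exactly the step you flag as the main obstacle, and in the paper's organization it is absorbed into showing that $\fix$ is full and faithful rather than being carried out for \cp\ directly.
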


Actually, \cite[Theorem~4.2]{cldx} is phrased in terms of \emph{reduced coactions}, which are just like (full) coactions but use $C^*_r(G)$ rather than $C^*(G)$. It turns out that this causes no confusion or difficulty, because there is complete freedom of choice between reduced and full coactions \cite{fullred}, so the preceding theorem follows more-or-less immediately from the one appearing in \cite{cldx}.

The proof of CLDC is \emph{not} parallel to that of CLDA: in \cite{cldx} we construct a quasi-inverse functor $\fix:(C_0(G),\rt)\dn\act\to\ncoact$.
But again, we appeal to a short-cut to avoid the drudgery of showing that \emph{both} compositions are naturally isomorphic to the identity:
standard category theory tells us that it suffices to show that $\fix$ is full and faithful, and that
$\cp\circ\fix\cong\id$,
and this is what we did.
This strategy has the advantage of displaying a quasi-inverse --- an explicit functor which 
``undoes'' the crossed-product functor up to isomorphism --- and moreover it does so without
explicitly computing that $\fix\circ\cp\cong\id$. 
We would like to do this for CLDA, although it would be much more cumbersome. 
(In CLDC, we could construct $\fix$ by carefully ``averaging'' over the dual action, 
whereas it is difficult to even formulate what averaging over a coaction would entail.)

\newpage

\section{Categorical Landstad duality for full crossed products}
\label{lecture 4}

\subsection{Crossed-product duality for coactions}

Crossed-product duality for actions was used to motivate Landstad duality for actions, 
and it will behoove us,
not just for motivation but also for future use in these lectures,
to describe the analogous crossed-product duality for coactions.
The original version is due to Katayama\footnote{fine print: Katayama used \emph{reduced} coactions, but subsequently we've worked out an equivalent version using \emph{normal} ones}.
To state it in modern terms, and again to help us in future lectures, we start with an arbitrary (i.e., not necessarily normal) coaction $(A,\delta)$.
Nilsen showed that there is a \emph{canonical surjection}
\[
\Phi:A\times_\delta G\times_{\what\delta} G\to A\otimes\KK.
\]
We can give an equivalent, albeit nonstandard, statement of Katayama duality:

\begin{thm}
[{see \cite[Proposition~2.2]{ekq}}]
The coaction $\delta$ is normal if and only if the canonical surjection $\Phi$ descends to an isomorphism
\[
A\times_\delta G\times_{\what\delta,r} G\cong A\otimes\KK.
\]
\end{thm}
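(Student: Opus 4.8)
The plan is to prove the biconditional by isolating a single analytic input, namely the isomorphism in the normal case, and making everything else formal by passing to the normalization. Throughout I read ``$\Phi$ descends to an isomorphism'' as the assertion that $\Phi$ annihilates $\ker q^n$, where $q^n\colon A\times_\delta G\times_{\what\delta}G\to A\times_\delta G\times_{\what\delta,r}G$ is the quotient onto the reduced (outer) crossed product, thereby inducing a map $\bar\Phi$, and that this $\bar\Phi$ is an isomorphism. Since $\Phi$ is already surjective, the only real question is whether $\ker\Phi=\ker q^n$.

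For the forward implication I would produce the isomorphism directly when $\delta$ is normal. In that case the reduced double crossed product is the reduced crossed product of the action $\what\delta$ on $A\times_\delta G$, and the triple $(A\times_\delta G,\what\delta,j_G)$ is an object of the comma category $(C_0(G),\rt)\dn\act$ appearing in \thmref{landstad coaction category}. The $\rt$-$\what\delta$ equivariant morphism $j_G\colon C_0(G)\to M(A\times_\delta G)$ is exactly the structure that trivializes $\what\delta$ up to stabilization: through the imprimitivity underlying Landstad duality for coactions it exhibits $A\times_\delta G\times_{\what\delta,r}G$ as $A\otimes\KK$, with $A$ reappearing as the Landstad algebra, that is, the value of the quasi-inverse $\fix$. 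Concretely one notes that the copy of $C_0(G)\times_{\rt}G\cong\KK$ inside the double crossed product, together with the commuting image of $A$, generates everything, and that normality of $\delta$ is precisely what makes the regular representation faithful on this copy; the resulting identification is Nilsen's map $\bar\Phi$.

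For the reverse implication I would argue by normalization. Let $(A^n,\delta^n)$ be the normalization of $(A,\delta)$, with canonical surjection $q=j_A\colon A\to A^n$, so that $\delta$ is normal exactly when $q$ is injective; recall that $q$ induces an isomorphism $A\times_\delta G\cong A^n\times_{\delta^n}G$ intertwining $\what\delta$ with $\what{\delta^n}$, hence an isomorphism of the two reduced double crossed products. Applying the forward implication to the normal coaction $\delta^n$ identifies this common algebra with $A^n\otimes\KK$ via an isomorphism $\Psi$. The decisive point is naturality of the canonical surjection with respect to the morphism $q\colon(A,\delta)\to(A^n,\delta^n)$, which at the reduced level yields
\[
\Psi=(q\otimes\id)\circ\bar\Phi .
\]
Granting this, if $\bar\Phi$ is an isomorphism then $q\otimes\id=\Psi\circ\bar\Phi^{-1}$ is an isomorphism, and since $\KK$ is simple this forces $q$ to be injective, that is, $\delta$ to be normal.

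I expect two genuine obstacles. First, the forward identification --- that normality makes $\bar\Phi$ an isomorphism --- is the real content of Katayama--Nilsen duality; whether one extracts it from the imprimitivity behind \thmref{landstad coaction category} or reproves it by hand through the $C_0(G)\times_{\rt}G\cong\KK$ computation, faithfulness of the regular representation is exactly where normality must enter, and this is the crux. Second, for the reverse implication the naturality square relating $\bar\Phi$ and $\Psi$ through $q\otimes\id$ must be verified by unwinding Nilsen's construction of $\Phi$ on the generators $j_A(a)$, $j_G(f)$, and the adjoined unitaries; one must also confirm at the outset that $\Phi$ factors through $q^n$ at all, which is where the reduced rather than full outer crossed product is essential.
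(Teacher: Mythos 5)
First, a caveat: the paper contains no proof of this statement --- it is imported verbatim (in ``nonstandard'' form) from \cite[Proposition~2.2]{ekq} --- so there is no in-paper argument to compare yours against. Judged on its own terms, the half of your proposal that does real work is correct and well organized: reading ``descends to an isomorphism'' as $\ker\Phi=\ker q^n$, reducing the reverse implication to the normal case via the normalization $q\colon A\to A^n$, using that $q\times G$ is an isomorphism of crossed products (hence of reduced double crossed products) together with naturality of Nilsen's map to get $(q\otimes\id)\circ\bar\Phi_A=\bar\Phi_{A^n}\circ(q\times G\times_r G)$, and concluding injectivity of $q$ from injectivity of $q\otimes\id_\KK$. (The last step is right, though the relevant fact is $\ker(q\otimes\id_\KK)=\ker q\otimes\KK$, i.e.\ exactness of $-\otimes\KK$, rather than simplicity of $\KK$.) This is essentially how \cite{ekq} arranges the reduction.

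The genuine gap is the forward implication, which is the entire analytic content of the theorem, and which you do not prove: you name it as Katayama--Nilsen duality and gesture at extracting it from \thmref{landstad coaction category}. That gesture has two problems. First, the equivalence of \thmref{landstad coaction category} (via $\fix$ and Rieffel's proper-action machinery) produces, at best, \emph{some} isomorphism $A\times_\delta G\times_{\what\delta,r}G\cong A\otimes\KK$; the theorem asserts that the \emph{particular} map $\bar\Phi$ is an isomorphism, and your reverse implication genuinely needs this --- the naturality square you invoke is a square for $\Phi$, so if $\bar\Phi_{A^n}$ were only known to differ from an isomorphism, the argument that $q\otimes\id$ is injective would collapse. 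Identifying the abstract isomorphism with $\bar\Phi$ on the generators $j_{A\times G}(j_A(a))$, $j_{A\times G}(j_G(f))$, $j_G(s)$ is therefore unavoidable, and you have not done it. Second, there is a real risk of circularity: Katayama's theorem is historically an \emph{input} to Landstad duality for coactions, not a corollary of it, so deriving the normal case from \cite{cldx} requires checking that the chain of dependencies does not loop back. In short: the skeleton is sound and you are candid about where the content lies, but the crux --- that normality of $\delta$ forces $\ker\Phi$ to equal the kernel of the regular representation of the double crossed product --- remains a black box.
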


Thus, paralleling the situation for actions, Katayama duality allows us to recover 
\sk $A$ 
up to Morita equivalence from the crossed product $A\times_\delta G$, whereas Landstad duality for coactions allows recovery up to isomorphism.

\subsection{Full CLDA}

Landstad duality for reduced crossed products shows how to recover an action up to isomorphism from the reduced crossed product, plus some additional information. 
What about a version for the full crossed product?
Here is the categorical version:

\begin{thm}
[{\cite[Theorem~5.1]{clda}}]
\label{full landstad category}
The functor
\begin{align*}
\cp:(A,\alpha)&\mapsto (A\times_\alpha G,\what\alpha,i_G)\\
\phi&\mapsto \phi\times G
\end{align*}
is an equivalence from the category \act\
of actions \(of $G$\)
to the comma category $(C^*(G),\delta_G)\dn\mcoact$
of maximal coactions \(of $G$\)
under the canonical coaction $\delta_G$ on $C^*(G)$.
\end{thm}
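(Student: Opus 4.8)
The plan is to follow the same three-step strategy used for the reduced case in \thmref{actionnormal}: invoke the standard criterion from \cite{maclane} that a functor is an equivalence as soon as it is full, faithful, and essentially surjective. Before doing so one checks that $\cp$ really takes values in the comma category $(C^*(G),\delta_G)\dn\mcoact$: the dual coaction $\what\alpha$ on a \emph{full} crossed product is always \emph{maximal} (the maximal analogue of the normality recorded in \exref{normal}), and the relation $\what\alpha(i_G(s))=i_G(s)\otimes s$ shows that $i_G\colon(C^*(G),\delta_G)\to(A\times_\alpha G,\what\alpha)$ is a morphism in \coact, so that the triple $(A\times_\alpha G,\what\alpha,i_G)$ is a genuine object of the comma category.

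Essential surjectivity is precisely the noncategorical full Landstad duality --- the maximal-coaction counterpart of \thmref{landstad normal}, proved in \cite{kqfulllandstad}: a $C^*$-algebra $B$ carrying the object-structure of the comma category, i.e.\ a maximal coaction $\delta$ together with a strictly continuous unitary homomorphism $u\colon G\to M(B)$ satisfying $\delta(u_s)=u_s\otimes s$, is isomorphic, compatibly with this structure, to some $(A\times_\alpha G,\what\alpha,i_G)$. Faithfulness is immediate and uses no Landstad machinery: if $\phi,\psi\colon(A,\alpha)\to(B,\beta)$ satisfy $\phi\times G=\psi\times G$, then precomposing with $i_A$ and using $(\phi\times G)\circ i_A=i_B\circ\phi$ gives $i_B\circ\phi=i_B\circ\psi$, whence $\phi=\psi$ because $i_B$ is faithful (the canonical map into a full crossed product is always injective).

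Fullness is the substance of the proof, and it runs parallel to step~(1) in the outline of \thmref{actionnormal}. Given a comma-category morphism
\[
\sigma\colon(A\times_\alpha G,\what\alpha,i_G^\alpha)\to(B\times_\beta G,\what\beta,i_G^\beta),
\]
I would consider the square
\[
\xymatrix{
A \ar[r]^-{i_A} \ar@{-->}[d]_\phi
&M(A\times_\alpha G) \ar[d]^{\sigma}
\\
M(B) \ar[r]_-{i_B}
&M(B\times_\beta G)
}
\]
and reduce everything to the containment $\sigma\circ i_A(A)\subseteq i_B(M(B))$. Once this is known, faithfulness of $i_B$ lets us \emph{define} $\phi:=i_B\inv\circ\sigma\circ i_A$, and the fact that $\sigma$ intertwines the dual coactions and the unitaries forces $\phi$ to be $\alpha-\beta$ equivariant and nondegenerate, after which $\phi\times G=\sigma$ holds by construction on the generators $i_A(A)$ and $i_G^\alpha(C^*(G))$.

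The main obstacle is establishing that containment. It rests on a Landstad-type intrinsic characterization of the image $i_A(A)$ inside $M(A\times_\alpha G)$ purely in terms of the maximal dual coaction $\what\alpha$ and the unitaries $i_G$ --- a ``maximal Landstad algebra''. The crucial point is that $\sigma$, being a morphism in the comma category, respects exactly this defining data, so it must carry the Landstad algebra of $A\times_\alpha G$ into that of $B\times_\beta G$. The delicate part, and where the present case genuinely differs from \thmref{actionnormal}, is that the Landstad conditions must be reformulated for \emph{maximal} rather than normal coactions; verifying that these conditions still single out $i_A(A)$, and that they are preserved by $\sigma$, is where the real work lies.
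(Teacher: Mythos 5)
Your overall skeleton (check the functor lands in the comma category, then prove full, faithful, essentially surjective) is reasonable, and two of the three pieces are fine: maximality of $\what\alpha$ and the equivariance of $i_G$ do make $(A\times_\alpha G,\what\alpha,i_G)$ an object of $(C^*(G),\delta_G)\dn\mcoact$; faithfulness follows from injectivity of $i_B$ exactly as you say; and essential surjectivity can legitimately be quoted from \cite[Theorem~3.2]{kqfulllandstad} (though note that in these notes that statement is presented as a \emph{corollary} of the theorem you are proving, so you must cite the original source, not Corollary~\ref{full landstad}).

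The genuine gap is in your fullness argument. You reduce everything to the containment $\sigma\circ i_A(A)\subseteq i_B(M(B))$ and then defer "the real work" to a hoped-for "maximal Landstad algebra": an intrinsic characterization of $i_A(A)$ inside $M(A\times_\alpha G)$ in terms of $\what\alpha$ and $i_G$, preserved by comma-category morphisms. No such characterization is established here or in the cited literature, and the authors state explicitly (see the Remark closing Section~\ref{lecture 6}) that they were \emph{not} able to prove the uniqueness/fullness clause for full crossed products by adapting Landstad's techniques --- those techniques (spectral subspaces and the conditional expectation onto the fixed-point algebra) live on the reduced crossed product, where the dual coaction is normal and concretely implemented on $L^2(G)$; there is no analogous handle on the full crossed product. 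The paper's actual route is entirely different: it first proves the maximal--normal equivalence $\nor|\colon\mcoact\to\ncoact$ (\thmref{max-nor}, using Fischer's universal properties of maximalization and normalization), extends it to an equivalence $\wilde\nor$ of the comma categories, and then observes that with suitable conventions $\wilde\nor\circ\CP=\RCP$; since $\RCP$ and $\wilde\nor$ are equivalences, so is $\CP$. In particular fullness of $\CP$ is \emph{deduced} from fullness of $\RCP$ via $\wilde\nor$, not re-proved by Landstad-type computations. As written, your proof is incomplete at precisely the step you yourself identify as "where the real work lies."
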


To explain this theorem, we need to introduce maximal coactions.
\sknote{I deleted a redundant discussion of $\Phi$.}
%

\begin{defn}
A coaction $(A,\delta)$ is \emph{maximal} if the canonical surjection $\Phi$ is an isomorphism.
\end{defn}

We'll explain more fully later, but roughly speaking the reason for the name ``maximal'' is that every coaction $(A,\delta)$ sits in a diagram of the following form:
\[
\xymatrix{
(A^m,\delta^m) \ar[r]
&(A,\delta) \ar[r]
&(A^n,\delta^n),
}
\]
where the coactions on the left and right ends are maximal and normal, respectively, the arrows are surjections, and all three crossed products are isomorphic.\footnote{Thus, normal coactions can be thought of as ``minimal''.}

\begin{ex}\label{maximal}
The dual coaction $\what\alpha$ on the full crossed product 
$A\times_\alpha G$ by an action is automatically maximal --- this was proved in \cite[Proposition~3.4]{ekq}.
In particular, the canonical coaction $(C^*(G),\delta_G)$ is maximal, since $\delta_G$ is the dual coaction on the crossed product
\[
C^*(G)=\C\times_{\text{tr}} G
\]
of the trivial action on $\C$.

For any $A$ and $G$, the trivial coaction $a\mapsto a\otimes 1$ is also maximal,
since the canonical surjection becomes
\[
 \id\otimes T\colon A\otimes C_0(G)\otimes C^*(G) \to A\otimes K(L^2(G)),
\]
where $T$ is the canonical isomorphism.
(Recall from Example~\ref{normal} that the trivial coaction is normal as well.)
\end{ex}

We gather the maximal coactions into a category:

\begin{defn}
\mcoact\ is the full subcategory of \coact\ whose objects are the maximal coactions of $G$.
\end{defn}

Thus $(C^*(G),\delta_G)$ is an object in \mcoact, and 
\thmref{full landstad category} says that
\sk
the crossed-product functor
\[
\cp:\act\to \bigl((C^*(G),\delta_G)\dn\mcoact\bigr)
\]
is an equivalence of categories.
As such, $\cp$ 
is full, faithful, and essentially surjective.
As we've seen in the earlier versions of Landstad duality, the essential surjectivity gives a characterization of full crossed products:

\begin{cor}
[{\cite[Theorem~3.2]{kqfulllandstad}}]
\label{full landstad}
Let $B$ be a $C^*$-algebra and $G$ a locally compact group.
Then
$B$ is isomorphic to a full crossed product $A\times_\alpha G$ by an action 
\sk 
if and only if there are a strictly continuous unitary homomorphism $u:G\to M(B)$ and a maximal coaction $\delta$ of $G$ on $B$ such that
\[
\delta(u_s)=u_s\otimes s.
\]
\end{cor}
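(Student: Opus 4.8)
The plan is to read the corollary off from the essential-surjectivity half of \thmref{full landstad category}, in exactly the way that classical Landstad duality was recovered from CLDA in the subsection ``Deducing classical Landstad duality from CLDA''. The conceptual bridge between the analytic relation $\delta(u_s)=u_s\otimes s$ and the categorical statement is the observation that this relation says precisely that $u$, regarded as a morphism $u\colon C^*(G)\to B$ in \cs\ (via the usual correspondence between strictly continuous unitary homomorphisms $G\to M(B)$ and nondegenerate homomorphisms $C^*(G)\to M(B)$), is $\delta_G-\delta$ equivariant: evaluating the equivariance condition $(u\otimes\id)\circ\delta_G=\delta\circ u$ on $s\in G$ gives $u_s\otimes s=\delta(u_s)$. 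Thus the relation is nothing but the statement that $u$ is a morphism $(C^*(G),\delta_G)\to(B,\delta)$ in \coact.

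For the forward implication, I would start from an isomorphism $\theta\colon B\to A\times_\alpha G$ for some action $(A,\alpha)$ and transport the structure carried by the full crossed product back to $B$. Using the canonical extension $\bar\theta\colon M(B)\to M(A\times_\alpha G)$, set $u:=\bar\theta\inv\circ i_G$ and let $\delta$ be the unique coaction on $B$ making $\theta$ equivariant. The dual coaction $\what\alpha$ is maximal by \exref{maximal}, and since maximality is preserved under equivariant isomorphism in \coact, the transported coaction $\delta$ is again maximal. The defining relation for the full dual coaction on generators --- the full-crossed-product analogue of \eqref{alpha hat G}, namely $\what\alpha(i_G(s))=i_G(s)\otimes s$ --- then pulls back along $\theta$ to $\delta(u_s)=u_s\otimes s$, producing the required $u$ and $\delta$.

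For the converse, given $u$ and a maximal coaction $\delta$ satisfying $\delta(u_s)=u_s\otimes s$, the equivariance observation above together with maximality of $\delta$ shows that the triple $(B,\delta,u)$ is an object of the comma category $(C^*(G),\delta_G)\dn\mcoact$. Because \thmref{full landstad category} asserts that $\cp$ is an equivalence, it is in particular essentially surjective, so there exist an action $(A,\alpha)$ and an isomorphism
\[
\theta\colon (B,\delta,u)\to (A\times_\alpha G,\what\alpha,i_G)
\]
in the comma category. Forgetting the extra structure, $\theta\colon B\to A\times_\alpha G$ is an isomorphism of $C^*$-algebras, which is exactly what is claimed.

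I expect no genuine mathematical obstacle here: the substantive content --- essential surjectivity, which is itself Landstad's \emph{tour de force} repackaged --- is entirely inherited from the equivalence theorem. The only points demanding care are bookkeeping. One must confirm that integrating the strictly continuous unitary homomorphism $u\colon G\to M(B)$ yields a genuinely \emph{nondegenerate} homomorphism $C^*(G)\to M(B)$, so that $u$ is a legitimate morphism in \cs, and that maximality indeed transfers across the equivariant isomorphism in the forward direction. Both are standard, so the corollary is really just the translation of the essential surjectivity of $\cp$ into the language of unitaries and coactions.
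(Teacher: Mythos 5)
Your proposal is correct and follows exactly the route the paper intends: the corollary is read off from the essential surjectivity of the equivalence in \thmref{full landstad category}, with the forward direction obtained by transporting $i_G$ and the (maximal, by \exref{maximal}) dual coaction $\what\alpha$ back along the isomorphism, precisely mirroring the paper's earlier deduction of classical Landstad duality from CLDA with ``normal'' replaced by ``maximal.'' The bookkeeping points you flag (nondegeneracy of the integrated form of $u$, invariance of maximality under equivariant isomorphism) are the same ones the paper silently absorbs, so there is nothing further to add.
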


We could include a uniqueness clause in the above characterization, but it would be awkward, and is captured much more cleanly by the fullness and faithfulness properties of $\cp$.

The versions of categorical Landstad duality for full and reduced crossed products are obviously parallel, but they aren't proven using parallel techniques.
More precisely, we don't just adapt the proof of reduced Landstad duality to prove \thmref{full landstad category} ---
rather, we \emph{deduce} it from reduced Landstad duality,
with the help of another category equivalence:
\[
\mcoact\sim\ncoact,
\]
which we will discuss in the next lecture.
But to give some idea of what's going on, let's informally introduce this equivalence here.

The key is the concept of a \emph{normalization} of a coaction $(A,\delta)$, comprising a normal coaction $(B,\epsilon)$ together with a morphism $\phi:(A,\delta)\to (B,\epsilon)$ in \coact\ such that
\[
\phi\times G:A\times_\delta G\iso B\times_{\epsilon} G
\]
is an isomorphism.

It transpires that normalizations always exist, and are automatically surjective (see \cite[Proposition~4.5]{boiler}, or \cite[Propositions~2.3 and 2.6]{fullred}). Moreover, as we will discuss in the next lecture, normalization is a functor
\[
\nor:\coact\to\ncoact,
\]
and in fact restricts to an equivalence from \mcoact\ to \ncoact.

Anyway, the strategy for proving full Landstad duality (\thmref{full landstad category}) is to show that the diagram
\[
\xymatrix@C+30pt@R+20pt{
\act \ar[r]^-{\cp} \ar[dr]_(.3){\rcp}
&(C^*(G),\delta_G)\dn\mcoact \ar[d]^{\nor}
\\
&(C^*(G),\delta_G)\dn\ncoact
}
\]
commutes
in the sense that the composition
$\nor\circ\cp$ is naturally isomorphic to $\rcp$ (in fact, with an appropriate choice of conventions they are equal).

\subsection{Enchilada categories}

Time permitting, we take a break to give here an outline of the categorical approach to 
``classical'' crossed-product duality.
In \cite{BE} we used a different category of $C^*$-algebras, 
in which the morphisms are isomorphism classes of correspondences, 
and in which the isomorphisms are Morita equivalences. 
Part of what we showed was that
a crossed-product functor for actions of a fixed group~$G$
is  quasi-inverse to a crossed-product functor for coactions of~$G$.

\newpage

\section{Maximal-normal equivalence}
\label{lecture 5}

Recall from the preceding lecture that every coaction $(A,\delta)$ has a normalization, i.e., a normal coaction $(B,\epsilon)$ together with a morphism $\phi:(A,\delta)\to (B,\epsilon)$ in \coact\ such that
\[
\phi\times G:A\times_\delta G\to B\times_{\epsilon} G
\]
is an isomorphism.
Similarly,
a \emph{maximalization} of $(A,\d)$ is a maximal coaction $(B,\epsilon)$ together with a morphism $\phi:(B,\epsilon)\to (A,\delta)$ in \coact\ such that
\[
\phi\times G:B\times_{\epsilon} G\to A\times_\delta G
\]
is an isomorphism.
Maximalizations always exist
\cite[Theorem~3.3]{ekq}, and are automatically surjective.

The original idea was that any coaction could be replaced with a maximal one or a normal one with the same crossed product; but Fischer \cite{fischer:quantum} has subsequently shown that more is true.
Maximalizations and normalizations have the
following universal properties (for maximalizations see
\cite[Lemma~6.2]{fischer:quantum}, and for normalizations see
\cite[Lemma~4.2]{fischer:quantum} and also
\cite[Lemma~2.1]{ekq} --- the requirement
in \cite{ekq}
that the homomorphisms are into the $C^*$-algebras themselves rather
than into multipliers is not used in the proof of \cite[Lemma~2.1]{ekq}):

\begin{enumerate}
\item If $(B,\epsilon,\phi)$ is a maximalization of $(A,\delta)$, $(C,\gamma)$ is a maximal coaction, and $\tau:(C,\gamma)\to (A,\delta)$ is a morphism in \coact, then there is a unique morphism $\sigma$ making the diagram
\[
\xymatrix{
(C,\gamma) \ar[dr]^\tau \ar@{-->}[d]_\sigma^{!}
\\
(B,\epsilon) \ar[r]_\phi
&(A,\delta)
}
\]
commute in \coact.

\item If $(B,\epsilon,\phi)$ is a normalization of $(A,\delta)$, $(C,\gamma)$ is a normal coaction, and $\tau:(A,\delta)\to (C,\gamma)$ is a morphism in \coact, then there is a unique morphism $\sigma$ making the diagram
\[
\xymatrix{
(A,\delta) \ar[dr]_\tau \ar[r]^\phi
&(B,\epsilon) \ar@{-->}[d]^\sigma_{!}
\\
&(C,\gamma)
}
\]
commute.
\end{enumerate}

Thus, 
a maximalization of $(A,\delta)$ is a \emph{final object} in the comma category $\mcoact\dn (A,\delta)$, and
a normalization is an \emph{initial object} in the comma category $(A,\delta)\dn\ncoact$.

The above universal properties 
allow us to turn maximalization and normalization into functors from \coact\ into \mcoact\ and \ncoact, respectively.
To wit: suppose that a particular maximalization $(A^m,\delta^m,q_A^m)$ has been chosen for every coaction $(A,\delta)$.
Then for each morphism $\phi:(A,\delta)\to (B,\epsilon)$ in \coact\
there exists a unique morphism $\phi^m$ making the diagram
\begin{equation}
\label{maximalization functor}
\xymatrix{
(A^m,\delta^m) \ar@{-->}[r]^-{\phi^m}_-{!} \ar[d]_{q^m_A}
&(B^m,\epsilon) \ar[d]^{q^m_B}
\\
(A,\delta) \ar[r]_-\phi
&(B,\epsilon)
}
\end{equation}
commute.
Uniqueness of $\phi^m$ then ensures that this assignment is functorial; thus there is a unique functor $\maxi:\coact\to\mcoact$ given by
\[
\maxi(A,\delta)=(A^m,\delta^m)
\midtext{and}
\maxi(\phi)=\phi^m,
\]
such that 
\eqref{maximalization functor} commutes;
equivalently, such that
the assignment $A\mapsto q_A^m$ 
is a natural transformation from the composition $\inc\circ\maxi$ to the identity functor on \coact, where $\inc:\mcoact\hookrightarrow\coact$ is the inclusion functor.

Similarly, once a normalization $(A^n,\delta^n,q_A^n)$ has been chosen for every coaction $(A,\delta)$, there is a unique functor $\nor:\coact\to\ncoact$ given by
\[
\nor(A,\delta)=(A^n,\delta^n)
\midtext{and}
\nor(\phi)=\phi^n,
\]
such that for every morphism $\phi:(A,\delta)\to (B,\epsilon)$ in \coact\ the diagram
\[
\xymatrix{
(A,\delta) \ar[r]^-\phi \ar[d]_{q^n_A}
&(B,\epsilon) \ar[d]^{q^n_B}
\\
(A^n,\delta^n) \ar[r]_-{\phi^n}
&(B^n,\epsilon^n)
}
\]
commutes; equivalently,
$A\mapsto q_A^n$ is a natural transformation from 
the identity functor on \coact\ to
the composition $\inc\circ\maxi$,
where $\inc:\ncoact\hookrightarrow\coact$ is the inclusion functor.

Getting back to the universal properties:
they imply that for each coaction $(A,\delta)$ and each maximal coaction $(C,\gamma)$, the assignment
\[
\psi\mapsto q_A^m\circ\psi
\]
is a bijection:
\[
\mor_{\mcoact}((C,\gamma),(A^m,\delta^m)) \to \mor_{\coact}((C,\gamma),(A,\delta)).
\] 
Moreover, the resulting collection of bijections is \emph{natural} in $(A,\delta)$ and $(C,\gamma)$:
for any morphisms $\tau\colon (D,\eta)\to (C,\gamma)$ in $\mcoact$
and $\phi\colon(A,\delta)\to (B,\epsilon)$ in $\coact$,
we have
\[
 \phi^m\circ\psi\circ\tau
\mapsto q_B^m \circ (\phi^m\circ\psi\circ\tau)
 = \phi\circ (q_A^m \circ \psi)\circ\tau
\]
by~\eqref{maximalization functor}.

In categorical terms, what this all means is that 
$\maxi$ is a \emph{right adjoint}
for the inclusion functor $\inc^m:\mcoact\to\coact$;
\emph{i.e.}, 
the pair $(\inc^m,\maxi)$ is an \emph{adjunction}.
%

Similarly, for each coaction $(A,\delta)$ and each normal coaction $(C,\gamma)$, the assignment
\[
\psi\mapsto \psi\circ q_A^n
\]
is a bijection:
\[
\mor_{\ncoact}((A^n,\delta^n),(C,\gamma))\to\mor_{\coact}((A,\delta),(C,\gamma))
\]
which is natural in $(A,\delta)$ and $(C,\gamma)$,
so that 
$\nor$ is a \emph{left adjoint} of the inclusion functor $\inc^n:\ncoact\to\coact$,
and $(\nor,\inc^n)$ is an adjunction. 
%

Restricting $\nor$ to \mcoact\ and $\maxi$ to \ncoact, by generalized abstract nonsense\footnote{i.e., category theory}, thus makes the restrictions 
\[
(\nor|,\maxi|) = (\nor\circ\inc^m, \maxi\circ\inc^n)
\]
an adjoint pair of functors between \mcoact\ and \ncoact.

If we knew slightly more, namely that $\nor|$ and $\maxi|$ were \emph{quasi-inverses}, so that
\[
\nor|\circ\maxi|\cong \Id_{\ncoact}
\midtext{and}
\maxi|\circ\nor|\cong \Id_{\mcoact},
\]
we would have proved the \emph{maximal-normal equivalence}:

\begin{thm}
[{\cite[Corollary~3.4]{clda}}]
The restrictions
\[
\nor|:\mcoact\to\ncoact
\quad\text{and}\quad
\maxi|:\ncoact\to\mcoact
\]
are quasi-inverses, and hence give an equivalence of categories 
between $\mcoact$ and $\ncoact$.
\end{thm}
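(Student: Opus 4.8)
The plan is to argue entirely formally, leaning on the adjunction $(\nor|,\maxi|)$ already in hand together with the \emph{uniqueness} of maximalizations and normalizations (as final and initial objects, per the universal properties recalled above), and never to return to $C^*$-analysis or to CLDA/CLDC. Since $\nor|\dashv\maxi|$, it suffices to show that the unit $\eta\colon\Id_\mcoact\to\maxi|\circ\nor|$ and the counit $\epsilon\colon\nor|\circ\maxi|\to\Id_\ncoact$ of this adjunction are natural isomorphisms; and because a natural transformation all of whose components are isomorphisms is automatically a natural isomorphism, it is enough to check invertibility one object at a time.

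For the counit, fix a normal coaction $(A,\delta)$ and let $(A^m,\delta^m,q_A^m)$ be its chosen maximalization, so that $q_A^m\colon(A^m,\delta^m)\to(A,\delta)$ and $q_A^m\times G$ is an isomorphism. The key observation is that this single datum does double duty: because $(A,\delta)$ is normal and $q_A^m\times G$ is an isomorphism, the pair $((A,\delta),q_A^m)$ satisfies the very definition of a normalization of the maximal coaction $(A^m,\delta^m)$. Normalizations are initial objects of $(A^m,\delta^m)\dn\ncoact$, hence unique up to a unique compatible isomorphism, so the chosen normalization $\nor(A^m,\delta^m)=\nor|\maxi|(A,\delta)$ is canonically isomorphic to $(A,\delta)$.

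The unit is handled by the mirror-image argument. Fix a maximal coaction $(A,\delta)$ with chosen normalization $(A^n,\delta^n,q_A^n)$, where $q_A^n\colon(A,\delta)\to(A^n,\delta^n)$ and $q_A^n\times G$ is an isomorphism. Since $(A,\delta)$ is now maximal and $q_A^n\times G$ is an isomorphism, $((A,\delta),q_A^n)$ meets the definition of a maximalization of $(A^n,\delta^n)$; as maximalizations are final objects of $\mcoact\dn(A^n,\delta^n)$, uniqueness yields a canonical isomorphism $(A,\delta)\cong\maxi(A^n,\delta^n)=\maxi|\nor|(A,\delta)$.

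It remains to see that these object-wise isomorphisms are natural, and this is where the bookkeeping must be done with care, so I expect it to be the main obstacle: one checks that the comparison maps produced above are exactly the components of $\epsilon$ and $\eta$. Both the (co)unit and our comparisons are, by construction, the unique morphisms induced by the universal properties of the canonical maps $q_A^m$ and $q_A^n$ (which are themselves natural transformations), so they coincide, and naturality of the comparison isomorphisms between initial (resp.\ final) objects then follows from the usual uniqueness argument. With $\eta$ and $\epsilon$ natural isomorphisms, $(\nor|,\maxi|)$ is an adjoint equivalence; in particular $\nor|$ and $\maxi|$ are quasi-inverses, which is the assertion of the theorem. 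Everything other than this matching of the uniqueness isomorphisms with the unit and counit is immediate from the double-duty observation.
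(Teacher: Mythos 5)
Your argument is correct, and it rests on exactly the same key observation as the paper's own proof --- the ``double duty'' symmetry, namely that for normal $(A,\delta)$ the maximalization map $q^m_A\colon(A^m,\delta^m)\to(A,\delta)$ is simultaneously a normalization of $(A^m,\delta^m)$, and dually for maximal $(A,\delta)$ and $q^n_A$ --- but it packages that observation differently. The paper invokes the shortcut that a left adjoint which is full, faithful, and essentially surjective has its given right adjoint as a quasi-inverse: it verifies essential surjectivity of $\nor|$ by comparing the two initial objects $q^m$ and $q^n$ of $(B^m,\epsilon^m)\dn\ncoact$ (this is literally your counit step), and fullness and faithfulness by using that $q^n_B$ is final in $\mcoact\dn(B^n,\epsilon^n)$ when $(B,\epsilon)$ is maximal. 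You instead show directly that the unit and counit of the adjunction $\nor|\dashv\maxi|$ are objectwise isomorphisms, each component being the unique comparison between two initial (resp.\ final) objects of the relevant comma category. What your route buys is an adjoint equivalence with no appeal to the non-obvious ``fully faithful, essentially surjective adjoint'' lemma; the price is the bookkeeping you correctly flag as the main obstacle: one must check that the comparison isomorphism really is the (co)unit component of the \emph{composite} adjunction $(\nor\circ\inc^m,\maxi\circ\inc^n)$. This does go through --- for instance, naturality of $q^n$ and the defining property of the counit of $(\nor,\inc^n)$ give $\epsilon_{(A,\delta)}\circ q^n_{A^m}=q^m_A$, which by initiality of $q^n_{A^m}$ in $(A^m,\delta^m)\dn\ncoact$ forces $\epsilon_{(A,\delta)}$ to be the canonical comparison isomorphism --- but that computation should be written out rather than asserted.
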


We'll accomplish this in the next lecture.

\newpage

\section{Proofs of maximal-normal equivalence, and of Full CLDA}
\label{lecture 6}

\subsection{Maximal-normal equivalence}

In the last lecture, you heard about the equivalence between maximal and normal coactions.
Now we'll give the proof of that, and also show how the maximal-normal equivalence is used to prove categorical Landstad duality for full crossed products by actions.

Recall all the bits involved in the maximal-normal equivalence:
We have, loosely speaking, a commutative diagram of adjunctions
\sknote{loosely speaking\dots}
\[
\xymatrix@C+30pt@R+30pt{
\mcoact \ar@<1ex>[rr]^-{\nor|}
\ar@<1ex>[dr]^{\inc}
&&\ncoact \ar@<1ex>[ll]^-{\maxi|}
\ar@<1ex>[dl]^{\inc}
\\
&\coact \ar@<1ex>[ul]^{\maxi} \ar@<1ex>[ur]^{\nor},
}
\]
and we want to know that not only are the functors
$\nor|$ and $\maxi|$ left and right adjoints of each other (because adjunctions are preserved under composition),
but also:

\begin{thm}
\label{max-nor}
With the above notation,
$\nor|$ and $\maxi|$ are quasi-inverse equivalences.
\end{thm}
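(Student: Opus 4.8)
The adjunction $\nor|\dashv\maxi|$ has already been assembled, so by the standard criterion (see \cite{maclane}) it is an \emph{adjoint equivalence} precisely when its unit $\eta\colon\Id_{\mcoact}\Rightarrow\maxi|\circ\nor|$ and counit $\epsilon\colon\nor|\circ\maxi|\Rightarrow\Id_{\ncoact}$ are natural isomorphisms. Since a natural transformation is a natural isomorphism as soon as each of its components is an isomorphism, the plan is to verify that $\eta_{(A,\delta)}$ and $\epsilon_{(A,\delta)}$ are isomorphisms at every object; naturality then comes for free from the adjunction. All of the real content is supplied by two observations, each an immediate consequence of the \emph{definitions} of normalization and maximalization together with their uniqueness (initial and final objects are unique up to canonical isomorphism).

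First I would record the following two facts. If $(A,\delta)$ is normal, then the maximalization morphism $q^m_A\colon(A^m,\delta^m)\to(A,\delta)$ already exhibits $(A,\delta)$ as a \emph{normalization} of $(A^m,\delta^m)$: the target is normal, and $q^m_A\times G$ is an isomorphism by the definition of maximalization. Dually, if $(A,\delta)$ is maximal, then the normalization morphism $q^n_A\colon(A,\delta)\to(A^n,\delta^n)$ exhibits $(A,\delta)$ as a \emph{maximalization} of $(A^n,\delta^n)$, since $(A,\delta)$ is maximal and $q^n_A\times G$ is an isomorphism.

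Granting these, the counit at a normal coaction $(A,\delta)$ is the canonical comparison from the chosen normalization $\nor|\circ\maxi|(A,\delta)=\nor(A^m,\delta^m)$ to $(A,\delta)$; because $(A,\delta)$ is itself a normalization of $(A^m,\delta^m)$ by the first fact, uniqueness of normalizations (initiality in $(A^m,\delta^m)\dn\ncoact$) forces this comparison to be an isomorphism. Dually, the unit at a maximal coaction $(A,\delta)$ is the comparison $(A,\delta)\to\maxi(A^n,\delta^n)=\maxi|\circ\nor|(A,\delta)$, which is an isomorphism by uniqueness of maximalizations (finality in $\mcoact\dn(A^n,\delta^n)$) together with the second fact. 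Thus $\eta$ and $\epsilon$ are objectwise isomorphisms, hence natural isomorphisms, and $(\nor|,\maxi|)$ is an adjoint equivalence; in particular $\nor|\circ\maxi|\cong\Id_{\ncoact}$ and $\maxi|\circ\nor|\cong\Id_{\mcoact}$.

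The only point demanding care is the bookkeeping that identifies the abstract unit and counit of the composite adjunction $\nor|\dashv\maxi|$ with the concrete comparison morphisms furnished by the universal properties of $q^n$ and $q^m$; once this identification is in place, there is nothing left to compute, since the objectwise isomorphism statements are literally the defining properties of normalization and maximalization plus uniqueness. I expect this indexing and coherence step to be the main (though not deep) obstacle; the substantive inputs---existence, surjectivity, crossed-product invariance, and the universal properties of normalizations and maximalizations---have all been recorded already, so no further analytic work is required.
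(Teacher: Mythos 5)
Your proof is correct, and it rests on exactly the same two observations that drive the paper's proof: if $(A,\delta)$ is normal then $q^m_A$ is simultaneously a normalization of $(A^m,\delta^m)$, and dually for maximal $(A,\delta)$ and $q^n_A$. The packaging, however, is genuinely different. The paper invokes the criterion that a left adjoint which is full, faithful, and essentially surjective is an equivalence: fullness and faithfulness of $\nor|$ are checked by a hands-on diagram completion (given $\psi\colon(A^n,\delta^n)\to(B^n,\epsilon^n)$ with $(A,\delta)$ and $(B,\epsilon)$ maximal, finality of $q^n_B$ in $\mcoact\dn(B^n,\epsilon^n)$ produces a unique $\phi$ with $\phi^n=\psi$), while essential surjectivity is the observation that for normal $(B,\epsilon)$ both $q^m_B$ and $q^n_{B^m}$ are initial in $(B^m,\epsilon^m)\dn\ncoact$, whence $(B,\epsilon)\cong(B^{mn},\epsilon^{mn})$. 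You instead verify that the unit and counit of the already-assembled adjunction are objectwise isomorphisms. Your counit argument is literally the paper's essential-surjectivity argument, and your unit argument is interchangeable with the paper's fullness-and-faithfulness check, since a left adjoint is full and faithful exactly when the unit is an isomorphism. What your route buys is symmetry and economy: both halves reduce to ``two universal objects over the same base are canonically isomorphic.'' What it costs is the bookkeeping you correctly flag at the end --- the unit of the composite adjunction at a maximal $(A,\delta)$ is a composite $(A,\delta)\to(A^m,\delta^m)\to(A^{nm},\delta^{nm})$, and identifying it with the canonical comparison of final objects in $\mcoact\dn(A^n,\delta^n)$ requires checking compatibility with the structure maps; the paper's hom-set formulation sidesteps that identification entirely. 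Both arguments are complete and correct.
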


\begin{proof}
This depends upon a certain symmetry built into the definitions of maximalization and normalization:
If $(A,\delta)$ is normal then
$q^m:(A^m,\delta^m)\to (A,\delta)$ is not only a maximalization of $(A,\delta)$, but also a normalization of $(A^m,\delta^m)$, because $q^m\times G:A^m\times_{\delta^m} G\to A\times_\delta G$ is an isomorphism,
and similarly
if $(A,\delta)$ is maximal then
$q^n:(A,\delta)\to (A^n,\delta)$ is not only a normalization of $(A,\delta)$, but also a maximalization of $(A^n,\delta^n)$.
We thus have universal properties on both sides:
\begin{align*}
\xymatrix{
\mcoact&\ncoact
\\
(B,\epsilon) \ar[dr] \ar@{-->}[d]_{!}
\\
(A^m,\delta^m) \ar[r]^{q^m} \ar[dr]
&(A,\delta) \ar@{-->}[d]^{!}
\\
&(C,\eta)
}
&&
\xymatrix{
\mcoact&\ncoact
\\
(B,\epsilon) \ar[dr] \ar@{-->}[d]_{!}
\\
(A,\delta) \ar[r]^{q^n} \ar[dr]
&(A^n,\delta^n) \ar@{-->}[d]^{!}
\\
&(C,\eta)
}
\end{align*}

Ok, to start the proof of the theorem, we take advantage of
some abstract nonsense: it transpires that,
since $\maxi|$ is a right adjoint of $\nor|$,
\sknote{Invoking Corollary~4.3 of \emph{Adjoints for the Rest of Us},
which is non-obvious!}
it suffices to show that
\[
\nor|:\mcoact\to\ncoact
\]
is
\begin{enumerate}
\item full and faithful, and

\item essentially surjective.
\end{enumerate}
For (1), if $(A,\delta)$ and $(B,\epsilon)$ are maximal,
then any diagram of the form
\[
\xymatrix{
(A,\delta) \ar[r]^-{q^n_A} \ar@{-->}[d]_\phi^{!}
&(A^n,\delta^n) \ar[d]^{\psi=\phi^n}
\\
(B,\epsilon) \ar[r]_-{q^n_B}
&(B^n,\epsilon^n)
}
\]
can be uniquely completed commutingly,
because first of all $\phi$ exists uniquely to make the diagram commute
since $q^n_B$ is final in $\mcoact\dn (B^n,\epsilon^n)$,
and then
we must have $\psi=\phi^n$ because $\phi^n$ is the unique morphism such that $\phi^n\circ q^n=q^n\circ\phi$.

On the other hand, for (2),
if $(B,\epsilon)$ is normal,
then both morphisms
\[
q^m:(B^m,\epsilon^m)\to (B,\epsilon)
\midtext{and}
q^n:(B^m,\epsilon^m)\to (B^{mn},\epsilon^{mn})
\]
are initial in $(B^m,\epsilon^m)\dn\ncoact$,
and hence are isomorphic,
and consequently $(B,\epsilon)$ is isomorphic to the normalization
$(B^{mn},\epsilon^{mn})$ of the maximal coaction
$(B^m,\epsilon^m)$.
\end{proof}

\begin{q}
By Fischer's work (\cite{fischer:quantum}),
in \thmref{max-nor} we could just as well consider
categories of coactions of a fixed quantum group. 
What are other examples of subcategories satisfying all the relationships we needed for \thmref{max-nor}? 
We have been frustrated by our inability to find examples of this phenomenon in the literature. We have only been able to construct a couple of rather unsatisfying and somewhat artificial examples, where the subcategories are in fact isomorphic.
\end{q}

\subsection{Full CLDA}

We can now indicate how to prove the categorical Landstad duality for full crossed products.
Recall from Lecture 4 that we have an equivalence
\[
\widetilde\nor: (C^*(G),\delta_G)\dn\mcoact\to (C^*(G),\delta_G)\dn\ncoact,
\]
and the idea is to show that the diagram
\[
\xymatrix@C+30pt@R+20pt{
\act \ar[r]^-{\cp} \ar[dr]_(.3){\rcp}
&(C^*(G),\delta_G)\dn\mcoact \ar[d]^{\nor}
\\
&(C^*(G),\delta_G)\dn\ncoact
}
\]
commutes
in the sense that the composition
$\nor\circ\cp$ is naturally isomorphic to $\rcp$.

But notice: we haven't yet said anything about why we in fact have an equivalence
\[
\nor:(C^*(G),\delta_G)\dn\mcoact\sim (C^*(G),\delta_G)\dn\ncoact.
\]
Well, this can't really make sense, because $\nor$ is a functor from \mcoact\ to \ncoact\ (and in fact we've already abused the notation: it really should be the restriction $\nor|$).
So, more precisely, we must actually mean that we can produce an equivalence
\[
\wilde\nor:(C^*(G),\delta_G)\dn\mcoact\to (C^*(G),\delta_G)\dn\ncoact
\]
that somehow ``extends'' $\nor$. Here's the justification of that\footnote{and we mention here that the following result, as is the case with some others in these talks, is really just a special case of some abstract category theory; so, we could clean up the notation for the time being by stating and proving this abstract result, and then applying it to what we want, but the trade-off is that the audience would then be obliged to make all the connections between the general and the particular cases}:

\begin{prop}
If $\phi:(C^*(G),\delta_G)\to (A,\delta)$
in \mcoact,
so that $(A,\delta,\phi)$ is an object in $(C^*(G),\delta_G)\dn\mcoact$,
define
\[
\wilde\nor(A,\delta,\phi)=(A^n,\delta^n,q^n\circ\phi),
\]
an object
in $(C^*(G),\delta_G)\dn\ncoact$.

If $\sigma:(A,\delta,\phi)\to (B,\epsilon,\psi)$ in $(C^*(G),\delta_G)\dn\mcoact$,
define
\[
\wilde\nor(\sigma)=\sigma^n:\wilde\nor(A,\delta,\phi)\to \wilde\nor(B,\epsilon,\psi)
\]
in $(C^*(G),\delta_G)\dn\ncoact$.

Then $\wilde\nor$ is an equivalence.
\end{prop}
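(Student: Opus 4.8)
The plan is to verify directly that $\wilde\nor$ is full, faithful, and essentially surjective; by the standard criterion (the same one invoked for \thmref{actionnormal}) this makes it an equivalence. Everything rests on two facts established earlier: first, that the restriction $\nor|:\mcoact\to\ncoact$ is already an equivalence (\thmref{max-nor}), hence full, faithful, and essentially surjective on the underlying categories; and second, that the fixed object $(C^*(G),\delta_G)$ is \emph{maximal} (\exref{maximal}). The role of maximality is the crux, so I would isolate it first. Since $(B,\epsilon)$ maximal forces $q^n_B:(B,\epsilon)\to(B^n,\epsilon^n)$ to be simultaneously a maximalization of $(B^n,\epsilon^n)$ (the symmetry noted in the proof of \thmref{max-nor}), the universal property of maximalization applies with the maximal coaction $(C^*(G),\delta_G)$ as test object, so post-composition with $q^n_B$ is a bijection
\[
\mor_{\coact}\bigl((C^*(G),\delta_G),(B,\epsilon)\bigr)\to\mor_{\coact}\bigl((C^*(G),\delta_G),(B^n,\epsilon^n)\bigr),\quad \rho\mapsto q^n_B\circ\rho.
\]
This single bijection is what lets the ``basepoint'' datum $\phi$ of a comma object $(A,\delta,\phi)$ be transported back and forth, and it is the step I expect to carry the weight of the proof.

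Faithfulness is then immediate: a morphism in $(C^*(G),\delta_G)\dn\mcoact$ is just an \mcoact-morphism $\sigma$ between maximal coactions satisfying a triangle condition, and $\wilde\nor(\sigma)=\sigma^n=\nor|(\sigma)$, so injectivity follows from faithfulness of $\nor|$. For fullness, given a comma morphism $\bar\sigma:\wilde\nor(A,\delta,\phi)\to\wilde\nor(B,\epsilon,\psi)$, I would use fullness of $\nor|$ to lift it to an \mcoact-morphism $\sigma$ with $\sigma^n=\bar\sigma$; the only thing to check is the triangle $\sigma\circ\phi=\psi$. Precomposing the naturality identity $\sigma^n\circ q^n_A=q^n_B\circ\sigma$ with $\phi$ and using $\bar\sigma\circ(q^n_A\circ\phi)=q^n_B\circ\psi$ gives $q^n_B\circ(\sigma\circ\phi)=q^n_B\circ\psi$, whereupon the injectivity half of the displayed bijection (here crucially both $(C^*(G),\delta_G)$ and $(B,\epsilon)$ are maximal) yields $\sigma\circ\phi=\psi$.

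For essential surjectivity, start from an object $(A',\delta',\phi')$ of $(C^*(G),\delta_G)\dn\ncoact$. Essential surjectivity of $\nor|$ (equivalently, applying the quasi-inverse $\maxi|$) produces a maximal coaction $(A,\delta)$ together with an \ncoact-isomorphism $\theta:(A^n,\delta^n)\iso(A',\delta')$. It then remains to install the basepoint: the surjectivity half of the displayed bijection yields a (unique) \mcoact-morphism $\phi:(C^*(G),\delta_G)\to(A,\delta)$ with $q^n_A\circ\phi=\theta^{-1}\circ\phi'$, so that $(A,\delta,\phi)$ is a genuine object upstairs and $\theta$ becomes a comma isomorphism from $\wilde\nor(A,\delta,\phi)=(A^n,\delta^n,q^n_A\circ\phi)$ onto $(A',\delta',\phi')$. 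Finally I would record that $\wilde\nor$ is well defined on morphisms — $\wilde\nor(\sigma)=\sigma^n$ does satisfy the downstairs triangle, again by naturality of $q^n$ — and that functoriality is inherited from $\nor$. The main obstacle, as flagged, is purely the bookkeeping of the basepoint morphisms; maximality of $(C^*(G),\delta_G)$ is exactly what makes that bookkeeping reversible, and once the displayed bijection is in hand the remainder is routine.
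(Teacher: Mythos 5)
Your proof is correct and follows essentially the same strategy as the paper: verify that $\wilde\nor$ is full, faithful, and essentially surjective by combining the already-established equivalence $\nor|\colon\mcoact\to\ncoact$ with the maximality of $(C^*(G),\delta_G)$ and the symmetry that makes $q^n_B$ a maximalization of $(B^n,\epsilon^n)$ when $(B,\epsilon)$ is maximal. The only difference is organizational --- you package the basepoint bookkeeping into a single up-front bijection $\rho\mapsto q^n_B\circ\rho$, where the paper instead chases the corresponding diagrams directly via the naturality of $q^n$ and the initial/final-object properties --- but the underlying ideas coincide.
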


\sknote{Of course, this proposition and its proof are just translated from the abstract categorical version in \cite[Corollary~2.2]{clda}.  Now I'm not so sure I stand by the idea of 
not doing any abstract categorical nonsense\dots}

Note that we can regard $\wilde\nor$ as extending $\nor$ in the sense that $\nor(A,\delta)=(A^n,\delta^n)$.

\begin{proof}
We must show that $\wilde\nor$ is full, faithful, and essentially surjective.
For the first two properties,
let
\[
\rho:\wilde\nor(A,\delta,\phi)\to \wilde\nor(B,\epsilon,\psi).
\]
Consider the diagram
\[
\xymatrix{
&(C^*(G),\delta_G)
\ar[dl]_\phi \ar[dr]^\psi
\ar'[d]^{q_G^n}[dd]
\\
(A,\delta)
\ar[dd]_{q_A^n}
\ar@{-->}[rr]_(.3)\sigma^(.3){!}
&&(B,\epsilon)
\ar[dd]^{q_B^n}
\\
&(C^*_r(G),\delta_G^n)
\ar[dl]_{\phi^n} \ar[dr]^{\psi^n}
\\
(A^n,\delta^n)
\ar[rr]_-{\rho=\sigma^n}
&&(B^n,\epsilon^n).
}
\]
We want a unique $\sigma$ for which the top triangle commutes and $\rho=\wilde\nor\sigma=\sigma^n$.
Since $\nor:\mcoact\to\ncoact$ is full and faithful,
there is a unique $\sigma$ such that $\rho=\sigma^n$,
and then it suffices to show that the top triangle commutes.
For this, since $\nor$ is faithful,
it suffices to show that the bottom triangle commutes.
We have
\[
\rho\circ\phi^n\circ q^n
=\rho\circ q^n\circ\phi
=q^n\circ\psi
\]
(because $\rho$ is a morphism in the comma category),
and since $\psi^n$ is the unique morphism such that $\psi^n\circ q^n=q^n\circ\psi$, we must have
\[
\rho\circ\phi^n=\psi^n.
\]

For the essential surjectivity,
let $\phi:(C^*(G),\delta_G)\to (A,\delta)$
in \coact\ with $\delta$ normal,
and consider the diagram
\[
\xymatrix{
(C^*(G),\delta_G)
\ar[dr]_\phi
\ar@{-->}[r]^-\psi
&(A^m,\delta^m)
\ar[d]^{q^m} \ar[dr]^{q^n}
\\
&(A,\delta)
\ar@{-->}[r]_-\theta^-\cong
&(A^{mn},\delta^{mn}).
}
\]
Since $(A^m,\delta^m,q^m)$ is final in $\mcoact\dn (A,\delta)$,
there exists $\psi$ making the left triangle commute.
Since both $(A,\delta,q^m)$ and $(A^{mn},\delta^{mn},q^n)$
are initial in $(A^m,\delta^m)\dn\ncoact$,
there exists an isomorphism $\theta$ making the right triangle commute.
Then
\[
\theta:(A,\delta,\phi)\iso \widetilde\nor(A^m,\delta^m,\psi).
\qedhere
\]
\end{proof}

OK, that gives the equivalence
\[
(C^*(G),\delta_G)\dn\mcoact \sim
(C^*(G),\delta_G)\dn\ncoact.
\]
But
we still need to see how this allows us to deduce categorical Landstad duality for full crossed products from the version for reduced crossed products.
Well, it turns out that we can cheat:
we need to see that $\rcp$ is naturally isomorphic to $\nor\circ\cp$, but in fact we can choose our conventions so that they coincide, and that does it.
How can we make those choices?
By taking the normalization $A^n$ of $A$ to be the quotient of $A$ by $\ker j_A$, and the reduced crossed product $A\times_{\alpha,r} G$ to be the quotient of the full crossed product $A\times_\alpha G$ by the kernel of the regular representation.

\begin{rem}
The categorical perspective we have described in these lectures serves several purposes: 
first of all, it allows for better results. 
More precisely, there are some places where we were only able to prove the full extent 
of what we wanted once we took the categorical approach. 
For example, in the Landstad duality for full crossed products, 
we were only able to prove the uniqueness clause as we stated it in Lecture 4 
by using the universal properties in \cite{fischer:quantum}.

Taking a wider view, however, we want to suggest that what we have presented here 
is a method that can be profitably used in other circumstances. 
For example, 
in the context of classical crossed-product duality, 
the categorical perspective taken in~\cite{BE} highlighted various lacunae in the existing theory which we 
were subsequently able to fill.
\end{rem}

\newpage



\providecommand{\bysame}{\leavevmode\hbox to3em{\hrulefill}\thinspace}
\providecommand{\MR}{\relax\ifhmode\unskip\space\fi MR }
\providecommand{\MRhref}[2]{%
  \href{http://www.ams.org/mathscinet-getitem?mr=#1}{#2}
}
\providecommand{\href}[2]{#2}

\end{document}